\title{\LARGE \bf
Constrained Stochastic Optimal Control with a Baseline Performance Guarantee
}
\author{
Yinlam Chow\\
Institute of Computational \& Mathematical Engineering, Stanford University \\
%\texttt{prashanth.la@inria.fr}
Mohammad Ghavamzadeh\thanks{Mohammad Ghavamzadeh is at Adobe Research, on leave of absence from INRIA Lille - Team SequeL.}\\
INRIA Lille - Team SequeL \& Adobe Research}%\\
\newcommand{\BEAS}{\begin{eqnarray*}}
\newcommand{\EEAS}{\end{eqnarray*}}
\newcommand{\BEQ}{\begin{equation}}
\newcommand{\EEQ}{\end{equation}}
\newcommand{\BIT}{\begin{itemize}}
\newcommand{\EIT}{\end{itemize}}
\newcommand{\expec}{\mathbb{E}}
\newcommand{\real}{{\mathbb{R}}}
\newcommand{\reals}{\real}
\renewcommand{\natural}{{\mathbb{N}}}
\newcommand{\naturals}{\natural}
\newtheorem{proposition}[theorem]{Proposition}
\newtheorem{definition}[theorem]{Definition}
\newtheorem{assumption}[theorem]{Assumption}
\newcommand{\argmin}{\operatornamewithlimits{argmin}}
\def\A{\mathcal{A}}
\def\V{V}
\def\X{\mathcal{X}}
\begin{document}

\maketitle
\thispagestyle{empty}
\pagestyle{empty}

%%%%%%%%%%%%%%%%%%%%%%%%%%%%%%%%%%%%%%%%%%%%%%%%%%%%%%%%%%%%%%%%%%%%%%%%%%%%%%%%
\begin{abstract}
In this paper, we show how a simulated Markov decision process (MDP) built by the so-called \emph{baseline} policies, can be used to compute a different policy, namely the \emph{simulated optimal} policy, for which the performance of this policy is guaranteed to be better than the baseline policy in the real environment. This technique has immense applications in fields such as news recommendation systems, health care diagnosis and digital online marketing. Our proposed algorithm iteratively solves for a ``good" policy in the simulated MDP in an offline setting. Furthermore, we provide a performance bound on sub-optimality for the control policy generated by the proposed algorithm.
\end{abstract}

%%%%%%%%%%%%%%%%%%%%%%%%%%%%%%%%%%%%%%%%%%%%%%%%%%%%%%%%%%%%%%%%%%%%%%%%%%%%%%%%
\section{Introduction}\label{sec:intro}
In this paper, we show how a simulated Markov decision process (MDP) built by the so-called \emph{baseline} policies, can be used to compute a different policy, namely the \emph{simulated optimal} policy, for which the performance of this policy is guaranteed to be better than the baseline policy in the real environment. This policy evaluation technique has immense applications in various fields, where offline calculation of a new policy is computationally inexpensive, but execution of a new policy can be hazardous or costly if this new policy performs worse than the baseline policy. There are numerous applications that require such safety concerns. For example, in the personalized news article recommendation systems \cite{li2010contextual}, one requires a ``good" policy that selects articles to serve users based on contextual information in order to maximize total user clicks. In this online setup, real-time policy evaluation is dangerous in the sense that a slight fluctuation of performance may yield a huge loss in click rates. 
Similar examples can be found in patient diagnosis systems \cite{jeanpierre2004automated,goulionis2009medical} as well, where testing new clinical strategies on human trials is often too risky. Further applications on medical trials can be found in automatic Propofol administration \cite{smith1993adverse} and neuroprosthetic control \cite{mahmoudi2010extracting}. Last but not least, under the framework of digital online marketing \cite{kiang2000marketing}, one also require a ``good" ad-recommendation policy that sequentially displays attractive ads in order to increase customers' click-rates while the performance of this policy is guaranteed to be better than the company's baseline marketing strategy. 

In this paper, we make several assumptions. First, we assume the evolution of the real environment follows a Markov decision process. This assumption is standard in most literatures of sequential decision making problems \cite{bertsekas1995dynamic,puterman2009markov}. Second, we also assume that a deterministic error upper bound function between the true MDP and the simulated MDP is known. While this assumption can be justified probabilistically using the Chernoff-hoeffding inequality when the state and action spaces are finite, one may find it challenging to obtain such an error bound in advance for systems with continuous state and action spaces, especially when there is no prior knowledge in the real system model. Nevertheless, from the best of our knowledge, the method proposed in this paper is still novel and applicable in many areas. One can easily extend the analysis of our methods to include the case of probabilistic error bounds as well.

Since our problem formulation guarantees that the resultant control policy performs better than its baseline counterpart, one may connect our method to the vast literature of stochastic control problems with system uncertainties \cite{nilim2005robust,xu2010distributionally,regan2010robust}. The latter approach solves for a robust policy which guarantees worst-case performance. While our proposed method can also be posed as a stochastic control problem with system uncertainties characterized by the error upper bound, the resultant policy may be over-conservative. Furthermore, we are only interested in its performance under the real environment, rather than the worst case performance. On top of that, in the worst-case approach, various assumptions are required for the set of system uncertainties in order to construct numerically tractable algorithms. For example, see the rectangularity assumption of uncertain transition probability set in \cite{iyengar2005robust}. Therefore, our proposed algorithm in general provides a simple yet better control policy, compared to its robust counterpart. 

The rest of this paper is structured as follows. In Section \ref{sec:prelimMDP}, preliminary notations and definitions used in this paper are discussed. The constrained MDP problem formulation is given in Section \ref{sec:problem} where further analysis on the Lagrangian formulation based on state augmentations is given in Section \ref{sec:aug_MDP}. Based on the strong duality result in Section \ref{sec:strong_duality}, one can equivalently consider the dual Lagrangian formulation. On top of this, by using the feasibility condition in Section \ref{sec:suff_cond}, an iterative algorithm is provided in Section \ref{sec:algorithm}, followed by a performance analysis of sub-optimality in Section \ref{sec:perf}. Finally the results of this paper is concluded in Section \ref{sec:conclusion}.

%%%%%%%%%%%%%%%%%%%%%%%%%%%%%%%%%%%%%%%%%%%%%%%%%%%%%%%%%%%%%%%%%%%%%%%%%%%%%%%%
\section{Preliminaries} 
\label{sec:prelimMDP}
We consider problems in which the agent's interaction with the environment is modeled as a MDP. A MDP is a tuple $M=(\X,\A,R,P,P_0)$, where $\X=\{1,\ldots,n\}$ and $\A=\{1,\ldots,m\}$ are the state and action spaces; $R(x,a)\in[-R_{\max},R_{\max}]$ is the bounded cost random variable whose expectation is denoted by $r(x,a)=\E\big[R(x,a)\big]$; $P(\cdot|x,a)$ is the transition probability distribution; and $P_0(\cdot)$ is the initial state distribution. For simplicity, we assume that the system has a single initial state $x^0$, i.e.,~$P_0(x)=\mathbf{1}\{x=x^0\}$. All the results of the paper can be easily extended to the case that the system has more than one initial state. 
%where $x_{\mathcal S}$ is the sink state and $x_T$ is the target state; $C(x,a)\in[-C_{\max},C_{\max}]$ is the bounded reward function; $P(\cdot|x,a)$ is the transition probability distribution; and $P_0(\cdot)$ is the initial state distribution. For simplicity, in this paper we assume $P_0=\mathbf 1\{x=x^0\}$ for some given $x^0\in\{1,\ldots,n\}$. 
We also need to specify the rule according to which the agent selects actions at each state. A {\em stationary policy} $\mu(\cdot|x)$ is a probability distribution over actions, conditioned on the current state. We denote by $d_\gamma^\mu(x|x^0)=(1-\gamma)\sum_{k=0}^\infty\gamma^k \mathbb P(x_k=x|x_0=x^0;\mu)$ and $\pi_\gamma^\mu(x,a|x^0)=d_\gamma^\mu(x|x^0)\mu(a|x)$ the $\gamma$-discounted visiting distribution of state $x$ and state-action pair $(x,a)$ under policy $\mu$, respectively. 

However, even if the state/action spaces and reward functions are given, in many cases the real world model $M$ is unknown, in particular, the underlying transition probability $P$ cannot be easily obtained by the user. Rather, a simulated transition probability $\widehat{P}$ is often obtained by Monte Carlo sampling techniques with a baseline policy distribution $\mu_B$. In many engineering applications, the following simulated MDP model $\widehat{M}=(\X,\A,R,\widehat{P},P_0)$ can be built based on previous histories of data and Monte Carlo sampling techniques. There are many approaches to search for a ``good" control policy using the information of MDP model $\widehat{M}$. As the most direct approach, one can search for an optimal policy over $\widehat{M}$ with no interactions with the real world using dynamic programming or approximate dynamic programming methods. Unfortunately, while it could be computationally expensive to find an optimal policy from $\widehat{M}$ when the state/action spaces are huge, this optimal policy in general do not have a performance guarantee to the original model $M$. 

In order to characterize the error between the true and simulated models, define the ``mis-measure" of transition probability at $(x,a)\in\mathcal X\times \mathcal A$ as the 1-norm of the difference between $P$ and $\widehat{P}$, i.e.,
\[
\|P(\cdot|x,a)-\widehat{P}(\cdot|x,a)\|_1=\sum_{y\in\mathcal X}\left|P(y|x,a)-\widehat{P}(y|x,a)\right|.
\]
Suppose an upper bound $e(x,a)$ of the mis-measure is given, i.e.,
\[
e(x,a)\geq \sum_{y\in\mathcal X}\left|P(y|x,a)-\widehat{P}(y|x,a)\right|.
\]
Using the notion of transition probability ``mis-measure" , we will later illustrate a policy search method in this paper that finds an optimal control policy in the simulated model with guaranteed performance improvements over the baseline policy $\mu_B$. 

Before getting into the main results, we first introduce several policy spaces.
Define the space $H_t$ of admissible histories up to time $t$ by $H_t = H_{t-1} \times \mathcal X\times \mathcal A$, for $t\geq 1$, and $H_0=\mathcal X$. A generic element $h_t\in H_t$ is of the form $h_t = (x_0, a_0, \ldots , x_{t-1}, a_{t-1}, x_t)$. Let $\Pi_t$ be the set of all stochastic history dependent policies with the property that at each time $t$ the control is a function of $h_t$. In other words, $\Pi_{H,t} := \Bigl \{ \{\mu_0: H_0 \rightarrow \mathbb P(\mathcal A),\, \mu_1: H_1 \rightarrow \mathbb P(\mathcal A), \ldots,\mu_{t-1}: H_{t-1} \rightarrow \mathbb P(\mathcal A)\} | \mu_k(h_k) \in \mathbb P(\mathcal A) \text{ for all } h_k\in H_k, \, t-1\geq k\geq 0 \Bigr\}$. While history dependent policy space is the most general space for control policies that satisfy the causality assumption, it is often times computationally intractable to search for an optimal policy over this space. On the other hand, we define the set of Markov policies as a sequence of state-action mappings as follows: $\Pi_{M,t} := \Bigl \{ \{\mu_0: \mathcal X \rightarrow \mathbb P(\mathcal A),\, \mu_1: \mathcal X \rightarrow \mathbb P(\mathcal A), \ldots,\mu_{t-1}: \mathcal X \rightarrow \mathbb P(\mathcal A)\} | \mu_k(x_k) \in \mathbb P(\mathcal A) \text{ for all } x_k\in \mathcal X, \, t-1\geq k\geq 0 \Bigr\}$. Furthermore, when the state-action mapping is stationary across time, we define the space of stationary Markov policies as follows $\Pi_{S,t} := \Bigl \{ \{\mu: \mathcal X \rightarrow \mathbb P(\mathcal A),\, \mu: \mathcal X \rightarrow \mathbb P(\mathcal A), \ldots,\mu: \mathcal X \rightarrow \mathbb P(\mathcal A)\} | \mu(x_k) \in \mathbb P(\mathcal A) \text{ for all } x_k\in \mathcal X, \, t-1\geq k\geq 0 \Bigr\}$. We also let $\Pi_{H}=\lim_{t\rightarrow\infty}\Pi_{H,t}$, $\Pi_{M}=\lim_{t\rightarrow\infty}\Pi_{M,t}$ and $\Pi_{S}=\lim_{t\rightarrow\infty}\Pi_{S,t}$ be the set of history dependent policies, Markov policies and stationary Markov policies respectively. We will later show that the proposed method performs policy search over the space of stationary Markov policies without loss of optimality.

\section{Problem Formulation}\label{sec:problem}
We propose the following stochastic optimal control problem whose optimal policy $\mathcal U^\ast=\{\mu_{0}^\ast,\mu_{1}^\ast,\ldots\}$, 1) maximizes the expected return in model based MDP $\widehat{M}$ and 2) guarantees a better performance then $\mathcal U^B=\{\mu_{0}^B,\mu_{1}^B,\ldots\}$ in the real world MDP $M$,
\begin{alignat*}{2}
\max_{\mathcal U\in\Pi_H}& &\quad &\mathbb E_{\widehat{P}}\left(\sum_{k=0}^\infty\gamma^k r(x_k,a_k)\mid a_k\sim\mu_k,P_0\right)\\  
\text{subject to} & & \quad & \mathbb E_{P}\left(\sum_{k=0}^\infty\gamma^k r(x_k,a_k)\mid a_k\sim\mu_k,P_0\right)\geq \mathbb E_P\left(\sum_{k=0}^\infty\gamma^k r(x_k,a_k)\mid a_k\sim\mu_{k}^B,P_0\right).
\end{alignat*}
Now define the Lagrangian function as
\[
\begin{split}
L(\mathcal U,\lambda)\stackrel{\triangle}{=}&\mathbb E_{\widehat{P}}\left(\sum_{k=0}^\infty\gamma^k r(x_k,a_k)\mid a_k\sim\mu_k,P_0\right) \\
&+\lambda\left(\mathbb E_P\left(\sum_{k=0}^\infty\gamma^k r(x_k,a_k)\mid a_k\sim\mu_k,P_0\right)-  \mathbb E_P\left(\sum_{k=0}^\infty\gamma^k r(x_k,a_k)\mid a_k\sim\mu_{k}^B,P_0\right)\right).
\end{split}
\]
Since the baseline policy $\mathcal U^B$ is given, we define 
\[
M_B=\mathbb E_P\left(\sum_{k=0}^\infty\gamma^k r(x_k,a_k)\mid a_k\sim\mu_{k}^B,P_0\right)
\] 
as the safety threshold for the above problem and assume this quantity is known to the user. To solve the constrained risk sensitive optimization problem, we employ the Lagrangian formulation to convert the above problem into the following unconstrained problem:  
\begin{equation}
\label{eq:unconstrained-discounted-risk-measure}
\max_{\mathcal U\in\Pi_H}\min_{\lambda\geq 0} L(\mathcal U,\lambda).
\end{equation}
\section{The Augmented MDP}\label{sec:aug_MDP}
Since the Lagrangian function involves multiple probability distributions, the above problem cannot be easily solved using existing stochastic control methods from the MDP literature. Therefore, one constructs the following augmented MDP in order to rewrite $L(\mu,\lambda)$ under one MDP. Consider an augmented MDP $M^{\text{aug}}_{\lambda}=(\bar{\mathcal{X}},\bar{\mathcal{A}},  P^{\text{aug}},r_{\lambda},\gamma,P^{\text{aug}}_0)$ such that $\bar{\mathcal{X}}=\mathcal X\times I$, where $I=\{0,1\}$, $\bar{\mathcal{A}}=\mathcal A$, $\gamma$ is the discounted factor given above and,
\[
\begin{split}
r_{\lambda}(x,i,a)&=\left\{
\begin{array}{cc}
2 r(x,a)\mathbf{1}\{x=x_T\}&\text{if $i=0$}\\
2 \lambda r(x,a)\mathbf{1}\{x=x_T\}&\text{if $i=1$}\\
\end{array}\right.,\,\,\forall (x,i)\in\bar{\mathcal{X}},\,\, a\in\bar{\mathcal{A}},\\
P^{\text{aug}}_0(x,i)&=\left\{\begin{array}{cc}
\frac{1}{2} P_0(x)&\text{if $i=0$}\\
\frac{1}{2} P_0(x)&\text{if $i=1$}\\
\end{array}\right.,\,\,\forall (x,i)\in\bar{\mathcal{X}}.
\end{split}
\]
We also define the uncertain transition probability as follows:
\[
P^{\text{aug}}(x^\prime,i^\prime|x,i,a)=\left\{
\begin{array}{cc}
\widehat{P}(x^\prime|x,a)\mathbf 1\{i^\prime=0\}&\text{if $i=0$}\\
P(x^\prime|x,a)\mathbf 1\{i^\prime=1\}&\text{if $i=1$}\\
\end{array}\right.,\,\,\forall (x^\prime,i^\prime),(x,i)\in\bar{\mathcal{X}}.
\]
For any $(x,i)\in\bar{\mathcal X}$, define
the robust $\lambda-$parametrized optimal Bellman's operator $T_{\lambda}:\reals^{|\bar{\mathcal X}|}\rightarrow\reals^{|\bar{\mathcal X}|}$ as follows:
                     \[
                      T_{\lambda}[V](x,i)=\max_{a\in\bar{\mathcal A}}\left\{r_{\lambda}(x,i,a)+\gamma\sum_{(x^\prime,i^\prime)\in\bar{\mathcal X}}P^{\text{aug}}(x^\prime,i^\prime|x,i,a)V(x^\prime,i^\prime)\right\}.
                      \]
In the following analysis, we will assume $\lambda\geq 0$ is bounded\footnote{The boundedness assumption is justified in the solution algorithm section.}. 
From the literature in dynamic programming, one can easily see that the above Bellman operator satisfies the following properties, and the fixed point solution converges to the optimal solution of the Lagrangian function. For the proofs of these properties, please see Chapter 1 of \cite{bertsekas1995dynamic} for more details.

\begin{proposition}
The Bellman operator $T_{\lambda}[\V]$ has the following properties:
\begin{itemize}
\item (Monotonicity) If $\V_1(x,i)\geq \V_2(x,i)$, for any $x\in\X$, $i\in I$ then $T_{\lambda}[\V_1](x,i)\geq T_{\lambda}[\V_2](x,i)$.
\item (Constant shift) For $K\in\reals$, $T_{\lambda}[\V+K](x,i)=T_{\lambda}[\V](x,i)+\gamma K$.
\item (Contraction) 
For any $V_1,V_2:\mathcal X\times I\rightarrow\reals$, $\|T_{\lambda}[\V_1]-T_{\lambda}[\V_2]\|_{\infty}\leq \gamma \|\V_1-\V_2\|_{\infty}$, 
where $\|f\|_{\infty}=\max_{x\in\X,i\in I} |f(x,i)|$.
\end{itemize}
\end{proposition}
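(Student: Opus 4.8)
The plan is to treat the three claims in order of dependence, since all of them rest on a single structural fact: for every $(x,i)\in\bar{\mathcal X}$ and every $a\in\bar{\mathcal A}$, the augmented kernel $P^{\text{aug}}(\cdot,\cdot\mid x,i,a)$ is a genuine probability distribution over $\bar{\mathcal X}$. First I would verify this by splitting on the index $i$: when $i=0$ only the terms with $i'=0$ are nonzero and they sum to $\sum_{x'}\widehat{P}(x'\mid x,a)=1$, while when $i=1$ only the terms with $i'=1$ survive and they sum to $\sum_{x'}P(x'\mid x,a)=1$; in both cases every entry is nonnegative. This normalization, together with nonnegativity, is the only property of $P^{\text{aug}}$ I will need.

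For monotonicity, suppose $V_1\ge V_2$ pointwise. For each fixed action $a$, multiplying by the nonnegative weights $\gamma\,P^{\text{aug}}(x',i'\mid x,i,a)$ and summing preserves the inequality, and adding the common term $r_{\lambda}(x,i,a)$, which does not depend on $V$, preserves it as well. Since the pointwise maximum of a family that dominates another family termwise is itself the larger, taking $\max_a$ yields $T_{\lambda}[V_1](x,i)\ge T_{\lambda}[V_2](x,i)$.

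For the constant shift I would use normalization directly: because $\sum_{(x',i')}P^{\text{aug}}(x',i'\mid x,i,a)=1$, we have $\sum_{(x',i')}P^{\text{aug}}(x',i'\mid x,i,a)\,\big(V(x',i')+K\big)=\sum_{(x',i')}P^{\text{aug}}(x',i'\mid x,i,a)\,V(x',i')+K$. Substituting this into the definition of $T_{\lambda}$, the additive term $\gamma K$ factors out of the bracket and is unaffected by the maximization over $a$, giving $T_{\lambda}[V+K](x,i)=T_{\lambda}[V](x,i)+\gamma K$ exactly.

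The contraction is the only step that requires a small auxiliary inequality, and I expect it to be the main (if modest) obstacle. The standard device is that for any two bounded functions $f,g$ of $a$ one has $|\max_a f(a)-\max_a g(a)|\le\max_a|f(a)-g(a)|$. Applying this with $f,g$ taken to be the bracketed expressions for $V_1$ and $V_2$, the reward terms cancel, and I am left with $|T_{\lambda}[V_1](x,i)-T_{\lambda}[V_2](x,i)|\le\gamma\max_a\sum_{(x',i')}P^{\text{aug}}(x',i'\mid x,i,a)\,|V_1(x',i')-V_2(x',i')|$. Bounding $|V_1(x',i')-V_2(x',i')|$ by $\|V_1-V_2\|_{\infty}$ and using normalization collapses the inner sum to $1$, so the right-hand side is at most $\gamma\,\|V_1-V_2\|_{\infty}$. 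Since this holds for every $(x,i)$, taking the maximum over $(x,i)$ on the left gives $\|T_{\lambda}[V_1]-T_{\lambda}[V_2]\|_{\infty}\le\gamma\,\|V_1-V_2\|_{\infty}$. The only care needed is justifying the $\max$-difference inequality, which follows by evaluating each maximum at the maximizer of the other and invoking symmetry.
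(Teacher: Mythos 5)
Your proof is correct and matches the paper's approach: the paper offers no proof of this proposition at all, deferring to Chapter~1 of Bertsekas's \emph{Dynamic Programming and Optimal Control}, and your argument is exactly the classical one given there --- with the one nonstandard ingredient (that $P^{\text{aug}}(\cdot,\cdot\mid x,i,a)$ is a genuine stochastic kernel, checked by splitting on $i\in\{0,1\}$) verified explicitly, which is the only point where the augmented construction could conceivably have caused trouble. The three steps (termwise dominance preserved under $\max_{a}$, normalization for the constant shift, and the inequality $\left|\max_{a}f(a)-\max_{a}g(a)\right|\le\max_{a}\left|f(a)-g(a)\right|$ for the contraction, correctly justified over the finite set $\bar{\mathcal A}$ by evaluating at each maximizer) are all sound and complete.
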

\begin{theorem}\label{lem:tech_2}
For any $\lambda\geq 0$, there exists a unique solution to the fixed point equation: $T_\lambda[V](x,i)=V(x,i)$, $\forall (x,i)\in\bar{\mathcal X}$. Let $V_\lambda\in\reals^{|\bar{\mathcal X}|}$ be such unique fixed point solution. For any initial value function $V_{\lambda,0}(x,i)$, $V_\lambda(x,i)=\lim_{N\rightarrow\infty} T^N_\lambda[V][V_{\lambda,0}](x,i)$, $\forall (x,i)\in\bar{\mathcal X}$. Furthermore, 
\begin{equation}\label{eq:opt_cond}
\sum_{x,i}P^{\text{aug}}_0(x,i)V_\lambda(x,i)= \max_{\mathcal U\in \Pi_H}L(\mathcal U,\lambda)+\lambda M_B.
\end{equation}
\end{theorem}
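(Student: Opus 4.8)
The plan is to dispatch the existence, uniqueness, and value-iteration claims immediately from the contraction property, and then to spend the real effort on the identity \eqref{eq:opt_cond}. Since the preceding Proposition establishes that $T_\lambda$ is a $\gamma$-contraction on $(\reals^{|\bar{\mathcal X}|},\|\cdot\|_\infty)$ and this space is complete, I would invoke the Banach fixed point theorem to obtain a unique fixed point $V_\lambda$ together with the geometric convergence $T^N_\lambda[V_{\lambda,0}]\to V_\lambda$ from any starting iterate $V_{\lambda,0}$. This settles the first two sentences of the statement with no further work.

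For the identity, the first step is to appeal to standard discounted dynamic programming theory (e.g.,~\cite{bertsekas1995dynamic}) to identify the fixed point $V_\lambda$ of the optimal Bellman operator with the optimal value function of the augmented MDP $M^{\text{aug}}_\lambda$, so that $\sum_{x,i}P^{\text{aug}}_0(x,i)V_\lambda(x,i)=\max_{\mathcal U\in\Pi_H}\sum_{x,i}P^{\text{aug}}_0(x,i)V^{\mathcal U}_\lambda(x,i)$, with the maximum attained by the greedy stationary policy and unchanged upon enlarging the class to history-dependent policies.

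The heart of the argument is then a decomposition along the index $i$. Because $P^{\text{aug}}$ is block-diagonal in $i$ (a chain started in $i=0$ never leaves $i=0$, and likewise for $i=1$), the augmented return splits into two independent sub-MDPs. Evaluating against $P^{\text{aug}}_0(x,i)=\tfrac12 P_0(x)$ and using the reward scalings $r_\lambda(x,0,a)=2r(x,a)\mathbf 1\{x=x_T\}$ and $r_\lambda(x,1,a)=2\lambda r(x,a)\mathbf 1\{x=x_T\}$, the factor $\tfrac12$ cancels the factor $2$ in each branch, so the $i=0$ branch contributes $\mathbb E_{\widehat P}(\sum_k\gamma^k r(x_k,a_k)\mid \mu,P_0)$ and the $i=1$ branch contributes $\lambda\,\mathbb E_{P}(\sum_k\gamma^k r(x_k,a_k)\mid \mu,P_0)$. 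Summing the two contributions and recognizing the total as $L(\mathcal U,\lambda)+\lambda M_B$, directly from the definitions of the Lagrangian and of $M_B$, would yield \eqref{eq:opt_cond}.

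The step I expect to be the main obstacle is controlling the interaction between the two transition models $\widehat P$ and $P$ through the policy. A policy acting on the augmented state could in principle condition its action on $i$, which would decouple the two inner maximizations and overshoot the coupled quantity $\max_{\mathcal U}L(\mathcal U,\lambda)$ that uses a \emph{single} $\mathcal U$ in both expectations; conversely, the Lagrangian forces the same $\mathcal U$ in the $\widehat P$- and $P$-terms. The delicate part of the proof is therefore to pin down that the maximization realized by $T_\lambda$ ranges over exactly the policy class appearing on the right-hand side of \eqref{eq:opt_cond}, so that the greedy policy extracted from $V_\lambda$ is admissible for, and optimal in, the coupled problem. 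Once this alignment of policy classes is established, the block decomposition and the scaling cancellation deliver the identity.
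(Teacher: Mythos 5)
Your contraction/Banach argument for existence, uniqueness, and convergence of value iteration is exactly the paper's route (the main text simply defers these facts to Chapter 1 of \cite{bertsekas1995dynamic}), and the block-diagonal decomposition in $i$ with the $\tfrac12\times 2$ cancellation is indeed the intended mechanism behind \eqref{eq:opt_cond}. One cosmetic point: the factor $\mathbf 1\{x=x_T\}$ in the definition of $r_\lambda$ is a typo inherited from a stopping-state formulation and should be dropped, as you implicitly did when computing the branch contributions; compare $\widehat r_\lambda(x,0,a)=2r(x,a)$, with no indicator, in the feasibility lemma of Section \ref{sec:suff_cond}.

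The genuine problem is the step you defer at the end: the ``alignment of policy classes'' you hope to establish cannot be established in the form you describe. The optimal augmented value is the \emph{decoupled} quantity $\max_{\mu_0}\mathbb E_{\widehat P}\bigl(\sum_k\gamma^k r\bigr)+\lambda\max_{\mu_1}\mathbb E_{P}\bigl(\sum_k\gamma^k r\bigr)$, because even a Markov policy of the augmented MDP observes $i_0$, which is drawn at time $0$ and never changes; whereas a policy of the original MDP, however history-dependent, cannot condition its time-$0$ action on $i$. These two optima differ in general: consider a single decision at $x^0$ with two actions whose successor states (carrying values $1$ and $0$) under $\widehat P$ are swapped under $P$; the decoupled value is $\gamma(1+\lambda)$ while the coupled value is $\gamma\max(1,\lambda)$, strictly smaller for $\lambda>0$. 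So if $\Pi_H$ in \eqref{eq:opt_cond} is read as history-dependent policies of the \emph{original} MDP with a single $\mathcal U$ appearing in both expectations of $L$, the identity is simply false, and no proof that the greedy ($i$-dependent) policy is admissible and optimal for that coupled problem can exist. The paper's implicit reading --- made explicit right after the Dominating Policies theorem, where it says ``$\mu$ belongs to the set of stationary Markov policies of the augmented MDP'' --- is that $\mathcal U$ ranges over augmented ($i$-observing) policies and the two expectations in $L(\mathcal U,\lambda)$ are evaluated with the respective branch restrictions $\mathcal U(\cdot\mid\cdot,0)$ and $\mathcal U(\cdot\mid\cdot,1)$. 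Under that convention your decomposition already finishes the proof with nothing left to align; you should replace your deferred step with this convention (or restate \eqref{eq:opt_cond} with the decoupled maximum) rather than attempt to prove an equality that fails.
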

By the above theory of Bellman's recursion, one can show that the space of augmented state feedback Markov policies is dominating. 
\begin{theorem}[Dominating Policies]
Suppose $V_\lambda$ is the unique solution to $T_\lambda[V](x,i)=V(x,i)$, $\forall (x,i)\in\bar{\mathcal X}$. Then, suppose the policy $\mu$ is found by
\[
\mu(x,i)\in\arg\max_{a\in\bar{\mathcal A}}\left\{r_{\lambda}(x,i,a)+\gamma\sum_{(x^\prime, i^\prime)\in\bar{\mathcal X}}P^{\text{aug}}(x^\prime,i^\prime|x,i,a)V(x^\prime,i^\prime)\right\}.
\]
Then for $\mathcal U^\ast=\{\mu,\mu,\ldots\}$ being a sequence of Markov stationary optimal policies, it follows that $\mathcal U^\ast\in\arg\max_{\mathcal U\in\Pi_H}L(\mathcal U,\lambda)+\lambda M_B$. In other words, the class of $\lambda-$parametrized Markov stationary deterministic policies is dominating.
\end{theorem}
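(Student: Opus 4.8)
The plan is to reduce the claim to a single standard fact about discounted MDPs: the stationary deterministic policy that is greedy with respect to the optimal value function attains that value, hence attains the maximum of the Lagrangian. Since Theorem~\ref{lem:tech_2} already equates $\sum_{x,i}P^{\text{aug}}_0(x,i)V_\lambda(x,i)$ with $\max_{\mathcal U\in\Pi_H}L(\mathcal U,\lambda)+\lambda M_B$, it suffices to show that $\mathcal U^\ast=\{\mu,\mu,\ldots\}$ achieves the value $\sum_{x,i}P^{\text{aug}}_0(x,i)V_\lambda(x,i)$ in the augmented MDP $M^{\text{aug}}_\lambda$; the asserted optimality over all of $\Pi_H$ then follows immediately by comparison with \eqref{eq:opt_cond}.

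First I would note that, because $\bar{\mathcal A}$ is finite, the $\arg\max$ defining $\mu$ is nonempty and admits a deterministic selection, so $\mu$ is a well-defined stationary deterministic policy, which already yields the ``deterministic'' part of the conclusion. Next, for this fixed $\mu$ I would introduce the policy-evaluation operator
\[
T^\mu_\lambda[V](x,i)=r_\lambda(x,i,\mu(x,i))+\gamma\sum_{(x',i')\in\bar{\mathcal X}}P^{\text{aug}}(x',i'|x,i,\mu(x,i))V(x',i'),
\]
whose value function $V^\mu_\lambda$ (the expected discounted return of $\mathcal U^\ast$ in $M^{\text{aug}}_\lambda$) is, by the usual one-step expansion, a fixed point of $T^\mu_\lambda$. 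The same argument that gives the contraction property of $T_\lambda$ shows $T^\mu_\lambda$ is a $\gamma$-contraction on $\reals^{|\bar{\mathcal X}|}$, so it has a unique fixed point. Since $\mu$ is greedy with respect to $V_\lambda$, the maximizing action in $T_\lambda[V_\lambda](x,i)$ is exactly $\mu(x,i)$, whence $T^\mu_\lambda[V_\lambda]=T_\lambda[V_\lambda]=V_\lambda$; that is, $V_\lambda$ is itself a fixed point of $T^\mu_\lambda$. Uniqueness then forces $V^\mu_\lambda=V_\lambda$. Taking the $P^{\text{aug}}_0$-weighted sum shows that $\mathcal U^\ast$ achieves $\sum_{x,i}P^{\text{aug}}_0(x,i)V_\lambda(x,i)$, and invoking \eqref{eq:opt_cond} finishes the identity $L(\mathcal U^\ast,\lambda)+\lambda M_B=\max_{\mathcal U\in\Pi_H}L(\mathcal U,\lambda)+\lambda M_B$.

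The step I expect to require the most care is the bookkeeping identifying the augmented return $\sum_{x,i}P^{\text{aug}}_0(x,i)V^\mu_\lambda(x,i)$ with $L(\mathcal U^\ast,\lambda)+\lambda M_B$, i.e. that evaluating a policy in $M^{\text{aug}}_\lambda$ reproduces the Lagrangian of the corresponding policy in the original problem. This is exactly the correspondence underlying \eqref{eq:opt_cond}: the $i=0$ branch evolves under $\widehat P$ and reconstructs the $\mathbb E_{\widehat P}$ objective, while the $i=1$ branch evolves under $P$ and reconstructs the $\lambda$-weighted term $\mathbb E_P(\cdot)$, with the factor $2$ in $r_\lambda$ cancelling the $1/2$ weights in $P^{\text{aug}}_0$. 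I would verify carefully that the greedy $\mu$, although a priori permitted to depend on the index $i$, is interpreted consistently as an element of $\Pi_H$ achieving the maximum, so that the supremum over $\Pi_H$ is genuinely attained; once this correspondence is pinned down, the contraction and uniqueness argument above closes the proof.
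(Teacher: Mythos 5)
Your proposal is correct and follows essentially the same route as the paper's own proof: greediness of $\mu$ with respect to $V_\lambda$ gives $T^\mu_\lambda[V_\lambda]=T_\lambda[V_\lambda]=V_\lambda$, uniqueness of the fixed point of $T^\mu_\lambda$ forces $V^\mu_\lambda=V_\lambda$, and summing against $P^{\text{aug}}_0$ together with \eqref{eq:opt_cond} yields optimality over $\Pi_H$. Your added care about the $\gamma$-contraction of $T^\mu_\lambda$, the deterministic selection from the $\arg\max$, and the bookkeeping matching the augmented return to $L(\mathcal U^\ast,\lambda)+\lambda M_B$ only makes explicit what the paper leaves implicit.
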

\begin{proof}
Suppose $T^{\mu}_{\lambda}[V_\lambda](x,i)=T_{\lambda}[V_\lambda](x,i)$ for any $(x,i)\in\bar{\mathcal X}$, where the Bellman operator $T^{\mu}_{\lambda}[V]$ is defined as follows:
\[
 T^{\mu}_{\lambda}[V](x,i)=r_{\lambda}(x,i,\mu(x,i))+\gamma\sum_{(x^\prime,i^\prime)\in\bar{\mathcal X}}P^{\text{aug}}(x^\prime,i^\prime|x,i,\mu(x,i))V(x^\prime,i^\prime).
\]
Then, from the above equality and the fixed point equation, $T^{\mu}_{\lambda}[V_\lambda](x,i)=T_{\lambda}[V_\lambda](x,i)=V_\lambda(x,i)$. By the unique-ness of the fixed point equation: $T^{\mu}_{\lambda}[V](x,i)=V(x,i)$, one also obtains $V_\lambda(x,i)=V^\mu_{\lambda}(x,i)$, where $V^\mu_{\lambda}(x,i)=\mathbb E_{P^{\text{aug}}}\left(\sum_{k=0}^\infty\gamma^k r_{\lambda}(x_k,i_k,a_k)\mid a_k\sim\mu,x_0=x,i_0=i\right)$. By summing over $(x,i)\in\bar{\mathcal X}$, weighted by the initial distribution $\{P^{\text{aug}}_0(x,i)\}_{(x,i)\in\bar{\mathcal X}}$, one obtains 
\[
\begin{split}
\max_{\mathcal U\in\Pi_H}L(\mathcal U,\lambda)+\lambda M_B=&\mathbb E_{P^{\text{aug}}}\left(\sum_{k=0}^\infty\gamma^k r_{\lambda}(x_k,i_k,a_k)\mid a_k\sim\mu,P^{\text{aug}}_0\right)\\
=&\sum_{x,i}P^{\text{aug}}_0(x,i)V^\mu_\lambda(x,i)=\sum_{x,i}P^{\text{aug}}_0(x,i)V_\lambda(x,i),
\end{split}
\]
which further implies $\mu^\ast$ is a sequence of Markov stationary optimal policies (and $\mu$ is optimal).
\end{proof}

Since the class of stationary Markov policies is dominating, without loss of generality, one can write
\[
\max_{\mathcal U\in\Pi_H}\min_{\lambda\geq 0} L(\mathcal U,\lambda)=\max_{\mu\in\Pi_S}\min_{\lambda\geq 0} L(\mu,\lambda),
\]
where 
\[
\begin{split}
L(\mu,\lambda)=&\mathbb E_{\widehat{P}}\left(\sum_{k=0}^\infty\gamma^k r(x_k,a_k)\mid a_k\sim\mu,P_0\right) +\lambda\mathbb E_{P}\left(\sum_{k=0}^\infty\gamma^k r(x_k,a_k)\mid a_k\sim\mu,P_0\right)-\lambda M_B.
\end{split}
\]
and $\mu$ belongs to the set of stationary Markov policies of the augmented MDP $M^{\text{aug}}_{\lambda}$.

\section{Strong Duality}\label{sec:strong_duality}
In previous sections, one transforms the constrained optimization problem into its primal Lagrangian formulation. However in often times, the dual Lagrangian formulation often yields a computationally traceable and efficient algorithm. In this section, we want to show that strong duality exists, thus one can equivalently switch from the primal formulation to its dual counterpart.
Consider the primal optimization problem: 
\[
\mathcal P(P^{\text{aug}}_0,M_B)=\max_{\mu}\min_{\lambda\geq 0}\,\mathbb E_{  P^{\text{aug}}}\left(\sum_{k=0}^\infty\gamma^k r_{\lambda}(x_k,i_k,a_k)\mid a_k\sim\mu, P^{\text{aug}}_0\right)-\lambda M_B= \max_{\mu}\min_{\lambda\geq 0} L(\mu,\lambda),
\]
and the dual problem:
\[
\mathcal D(P^{\text{aug}}_0,M_B)=\min_{\lambda\geq 0}\max_{\mu}\,\mathbb E_{  P^{\text{aug}}}\left(\sum_{k=0}^\infty\gamma^k r_{\lambda}(x_k,i_k,a_k)\mid a_k\sim\mu, P^{\text{aug}}_0\right)-\lambda M_B=\min_{\lambda\geq 0}\max_{\mu} L(\mu,\lambda).
\]
Now consider the risk neutral optimization problem. Recall from that previous section that
\[
\max_{\mu}\mathbb E_{  P^{\text{aug}}}\left(\sum_{k=0}^\infty\gamma^k r_{\lambda}(x_k,i_k,a_k)\mid a_k\sim\mu, P^{\text{aug}}_0\right)=\sum_{(x,i)\in\bar{ \mathcal X}}P^{\text{aug}}_0(x,i)V_\lambda(x,i),
\]
where $V_\lambda(x,i)$ is the unique solution of the following Bellman equation $T_\lambda[V](x,i)=V(x,i)$.
Furthermore, by the linear programming formulation of Bellman equation, one obtains 
\begin{alignat}{2}
\text{RLP}(\lambda)=&\min_{V}&\quad & \sum_{(x,i)\in\bar{ \mathcal X}}P^{\text{aug}}_0(x,i)V(x,i)\label{problem_2}\\
&\text{s.t.} &\quad &r_{\lambda}(x,i,a)+\gamma\sum_{(x^\prime,i^\prime)\in\bar{\mathcal X}} P^{\text{aug}}(x',i'|x,i,a)V(x^\prime,i^\prime)\leq V(x,i),\,\,\forall (x,i)\in\bar{\mathcal X}, \,\,a\in \bar{\mathcal A}\nonumber\\
=&\max_{\mu}&\quad &\,\mathbb E_{P^{\text{aug}}}\left(\sum_{k=0}^\infty\gamma^k r_{\lambda}(x_k,i_k,a_k)\mid a_k\sim\mu, P^{\text{aug}}_0\right).\label{eq:1}
\end{alignat}

Consider the Lagrangian of the above problem with Lagrangian variable $\{\beta(x,i,a)\}_{(x,i)\in\bar{\mathcal X},a\in\bar{\mathcal A}}\in\reals^{|\bar{\mathcal X}|\times |\bar{\mathcal A}|}_{\geq 0}$:
\[
\begin{split}
\mathcal L_\lambda(V,\beta)=\sum_{(x,i)\in\bar{ \mathcal X}}P^{\text{aug}}_0(x,i)&V(x,i)+\sum_{(x,i)\in\bar{ \mathcal X},a\in\bar{\mathcal A}}\beta(x,i,a)\cdot\\
&\left\{r_{\lambda}(x,i,a)+\gamma\sum_{x'\in\mathcal X}P^{\text{aug}}(x',i'|x,i,a)V(x',s',i')- V(x,i)\right\},
\end{split}
\]
where
\[
\min_{V}\max_{\beta\in\reals^{|\bar{\mathcal X}|\times |\bar{\mathcal A}|}_{\geq 0}}\mathcal L_\lambda(V,\beta)=\max_{\mu}\mathbb E_{  P^{\text{aug}}}\left(\sum_{k=0}^\infty\gamma^k r_{\lambda}(x_k,i_k,a_k)\mid a_k\sim\mu, P^{\text{aug}}_0\right)=\text{RLP}(\lambda)
\]
follows from the primal argument of Lagrangian duality theory.
Note that $\mathcal L_\lambda(V,\beta)$ is a concave function in $\beta$ and convex function in $V$. Assume Slater's condition holds, this implies the following result:
\[
\min_{V}\max_{\beta\in\reals^{|\bar{\mathcal X}|\times |\bar{\mathcal A}|}_{\geq 0}}\mathcal L_\lambda(V,\beta)=\max_{\beta\in\reals^{|\bar{\mathcal X}|\times |\bar{\mathcal A}|}_{\geq 0}}\min_{V}\mathcal L_\lambda(V,\beta).
\]
Before proving the main result, we first state the Aubin's Minmax theorem.
\begin{lemma}[Minmax Theorem \cite{aubin1972pareto}]
Let $G_1$ and $G_2$ be convex subsets of linear topological spaces, and let $G_1$ be compact. Consider a function $\psi : G_1 \times G_2 \rightarrow \reals$ such that
\begin{itemize}
 \item for each $g_2 \in G_2$, $g_1 \rightarrow \psi(g_1, g_2)$ is convex and lower semi-continuous, and
\item for each $g_1 \in G_1$, $g_2 \rightarrow \psi(g_1, g_2)$ is concave.
\end{itemize}
Then there exists some $g_1^*\in G_1$ such that
\[
\inf_{G_1}\sup_{G_2}\psi(g_1,g_2)=\sup_{G_2}\psi(g^*_1,g_2)=\sup_{G_2}\inf_{G_1}\psi(g_1,g_2).
\]
\end{lemma}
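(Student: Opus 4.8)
The plan is to prove the two equalities in the statement separately: first the left equality (attainment of the infimum at some $g_1^*$), then the minimax identity on the right. For the attainment claim, observe that for each fixed $g_2$ the map $g_1 \mapsto \psi(g_1,g_2)$ is lower semi-continuous, so the upper envelope $\Phi(g_1) := \sup_{g_2 \in G_2}\psi(g_1,g_2)$, being a pointwise supremum of lower semi-continuous functions, is itself lower semi-continuous on $G_1$. Since $G_1$ is compact, $\Phi$ attains its infimum at some $g_1^* \in G_1$, giving $\sup_{G_2}\psi(g_1^*,g_2) = \Phi(g_1^*) = \inf_{G_1}\Phi = \inf_{G_1}\sup_{G_2}\psi$. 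This settles the first equality and reduces everything to proving $\inf_{G_1}\sup_{G_2}\psi = \sup_{G_2}\inf_{G_1}\psi$.

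The inequality $\sup_{G_2}\inf_{G_1}\psi \le \inf_{G_1}\sup_{G_2}\psi$ holds for any function (weak duality), so the whole difficulty lies in the reverse inequality. I would argue by contradiction. Write $v := \sup_{G_2}\inf_{G_1}\psi$ and suppose $\inf_{G_1}\Phi > v$; choose $\alpha$ with $v < \alpha < \inf_{G_1}\Phi$. Then for every $g_1 \in G_1$ there is a $g_2$ with $\psi(g_1,g_2) > \alpha$. Using lower semi-continuity of $\psi(\cdot,g_2)$, each superlevel set $U_{g_2} := \{g_1 \in G_1 : \psi(g_1,g_2) > \alpha\}$ is open, and the family $\{U_{g_2}\}_{g_2 \in G_2}$ covers the compact set $G_1$. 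Extracting a finite subcover indexed by $g_2^1,\dots,g_2^n$ yields $\max_{1 \le j \le n}\psi(g_1,g_2^j) > \alpha$ for all $g_1 \in G_1$; since $g_1 \mapsto \max_j \psi(g_1,g_2^j)$ is lower semi-continuous on the compact $G_1$, its infimum is attained and hence still strictly exceeds $\alpha$.

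The core step is then to collapse this finite family of ``directions'' into a single element of $G_2$. Consider $\phi(g_1,\theta) := \sum_{j=1}^n \theta_j\,\psi(g_1,g_2^j)$ on $G_1 \times \Delta_n$, where $\Delta_n$ is the standard simplex; $\phi$ is convex in $g_1$ (a nonnegative combination of convex functions) and linear, hence concave and continuous, in $\theta$, with $G_1$ and $\Delta_n$ both convex and compact. Applying the finite-dimensional minimax theorem --- obtainable from Kakutani's fixed-point theorem or a separating-hyperplane argument, and so not circular with the present lemma --- gives $\inf_{g_1}\max_\theta \phi = \max_\theta \inf_{g_1}\phi$. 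Since $\max_\theta \phi(g_1,\theta) = \max_j \psi(g_1,g_2^j) > \alpha$ uniformly, the left side exceeds $\alpha$, so there is $\theta^* \in \Delta_n$ with $\inf_{g_1}\sum_j \theta_j^*\,\psi(g_1,g_2^j) \ge \alpha$. Setting $\bar g_2 := \sum_j \theta_j^* g_2^j \in G_2$ (here convexity of $G_2$ enters) and invoking concavity of $\psi(g_1,\cdot)$ gives $\psi(g_1,\bar g_2) \ge \sum_j \theta_j^*\,\psi(g_1,g_2^j) \ge \alpha$ for every $g_1$, whence $\inf_{G_1}\psi(\cdot,\bar g_2) \ge \alpha > v = \sup_{G_2}\inf_{G_1}\psi$, a contradiction. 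This forces $\inf_{G_1}\Phi \le v$ and closes the argument.

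The main obstacle I anticipate is precisely this last reduction: passing from the covering-theoretic statement ``at each $g_1$ some $\psi(\cdot,g_2^j)$ beats $\alpha$'' to the existence of a single convex combination $\bar g_2$ that beats $\alpha$ simultaneously at every $g_1$. This is exactly where convexity in $g_1$, convexity of $G_1$, and concavity in $g_2$ must be combined, and it is routed through the auxiliary finite-dimensional minimax on the simplex. A secondary technical point worth checking is the joint regularity of $\phi$ (enough lower semi-continuity in $g_1$ together with continuity in $\theta$) needed to justify that finite-dimensional minimax and the attainment of $\max_\theta$; this is straightforward once one has restricted to the finite family $\{g_2^j\}$, but it should not be glossed over.
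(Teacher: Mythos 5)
The paper never proves this lemma: it is imported as a black-box citation to Aubin (1972), so there is no internal argument to compare against, and your proposal must stand on its own. It does. Your decomposition is the classical Fan--Sion-style proof: (i) the upper envelope $\Phi(g_1)=\sup_{g_2}\psi(g_1,g_2)$ is lower semi-continuous as a pointwise supremum of lsc functions, hence attains its infimum on the compact $G_1$, which settles the existence of $g_1^*$ and the first equality; (ii) weak duality is free; (iii) lsc in $g_1$ makes the superlevel sets $U_{g_2}=\{g_1:\psi(g_1,g_2)>\alpha\}$ open, so compactness extracts a finite family $g_2^1,\dots,g_2^n$ dominating $\alpha$ pointwise; (iv) a convex combination $\bar g_2=\sum_j\theta_j^* g_2^j$ (here convexity of $G_2$ and concavity of $\psi(g_1,\cdot)$ enter, and you correctly flag both) beats $\alpha$ uniformly, contradicting $\alpha>\sup_{G_2}\inf_{G_1}\psi$. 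Every step is valid at the stated level of generality, and you correctly identify step (iv) as the crux.

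One correction on how you discharge that crux: of the two routes you offer for the finite ``collapse'' step, only the separating-hyperplane one actually works here, and you should commit to it. The Kakutani route is not available at this generality: the ambient spaces are merely linear topological (Kakutani--Fan--Glicksberg requires a locally convex Hausdorff setting), and $\psi(\cdot,g_2)$ is only lower semi-continuous, which is too weak to give the upper hemicontinuity of the argmin correspondence that any fixed-point argument needs. The separation argument, by contrast, is elementary and non-circular, and it even bypasses the auxiliary minimax on $G_1\times\Delta_n$ that you set up: the set $K=\{y\in\reals^n:\exists\, g_1\in G_1,\ \psi(g_1,g_2^j)\le y_j\ \forall j\}$ is convex (by convexity of $G_1$ and of each $\psi(\cdot,g_2^j)$) and upward closed, and the covering property from step (iii) makes it disjoint from the box $\{y:y_j\le\alpha\ \forall j\}$; proper separation of two disjoint convex sets in $\reals^n$ yields a nonzero functional, upward closedness forces its coefficients to be nonnegative, and normalizing gives exactly the weights $\theta^*\in\Delta_n$ with $\inf_{g_1}\sum_j\theta_j^*\,\psi(g_1,g_2^j)\ge\alpha$. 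With that substitution made explicit, your contradiction $\inf_{G_1}\psi(\cdot,\bar g_2)\ge\alpha>\sup_{G_2}\inf_{G_1}\psi$ closes the proof, and as a bonus the argument makes clear that compactness of $G_1$ is used only for the attainment in (i) and the finite subcover in (iii).
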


Then, the next theorem shows that strong duality holds.
\begin{theorem}
For any given initial distribution $P^{\text{aug}}_0$ and safety threshold $M_B$, the following statement holds:
\[
\mathcal D(P^{\text{aug}}_0,M_B)=\mathcal P(P^{\text{aug}}_0,M_B).
\]
\end{theorem}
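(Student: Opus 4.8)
The plan is to recast both the minimax value $\mathcal{P}$ and the maximin value $\mathcal{D}$ entirely in terms of occupation measures, in which the payoff becomes jointly affine, and then to invoke Aubin's minimax theorem. The reason one cannot apply a minimax theorem directly to $L(\mu,\lambda)$ is that, although $L(\mu,\lambda)$ is affine in $\lambda$ for each fixed policy, it is in general \emph{not} concave in the raw policy parameters $\mu(a\mid x,i)$: the discounted value of an MDP is not a concave function of the policy. Removing this obstruction is exactly the purpose of the linear-programming material developed above, which lets us move to the dual (occupation-measure) variables $\beta$.

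First I would use the established identities to rewrite the inner maximization over policies as a linear maximization over occupation measures. By Theorem~\ref{lem:tech_2} together with the strong LP duality shown above, for each fixed $\lambda\in[0,\lambda_{\max}]$,
\[
\max_{\mu}L(\mu,\lambda)+\lambda M_B=\text{RLP}(\lambda)=\max_{\beta\ge 0}\min_{V}\mathcal L_\lambda(V,\beta).
\]
Carrying out the inner $\min_V$ forces the coefficients of $V$ to vanish, i.e. it restricts $\beta$ to the convex flow polytope $\Delta=\{\beta\ge 0:\text{the Bellman-flow constraints hold}\}$, on which $\min_V\mathcal L_\lambda(V,\beta)=\sum_{(x,i),a}\beta(x,i,a)\,r_\lambda(x,i,a)$. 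Setting
\[
\Psi(\lambda,\beta)=\sum_{(x,i),a}\beta(x,i,a)\,r_\lambda(x,i,a)-\lambda M_B,
\]
this yields $\max_\mu L(\mu,\lambda)=\max_{\beta\in\Delta}\Psi(\lambda,\beta)$, hence $\mathcal D=\min_{\lambda}\max_{\beta\in\Delta}\Psi$. Because the correspondence between stationary policies of $M^{\text{aug}}_{\lambda}$ and points of $\Delta$ is onto, and because $L(\mu,\lambda)$ depends on $\mu$ only through its occupation measure via $L(\mu,\lambda)=\Psi(\lambda,\beta_\mu)$, the same substitution applied to the primal ordering gives $\mathcal P=\max_{\beta\in\Delta}\min_{\lambda}\Psi$.

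Next I would verify the hypotheses of Aubin's theorem for $\Psi$ on $G_1\times G_2$ with $G_1=[0,\lambda_{\max}]$ and $G_2=\Delta$. Since $r_\lambda(x,i,a)$ is affine in $\lambda$ (it equals $2r(x,a)\mathbf 1\{x=x_T\}$ when $i=0$ and $2\lambda r(x,a)\mathbf 1\{x=x_T\}$ when $i=1$), the map $\lambda\mapsto\Psi(\lambda,\beta)$ is affine, hence convex and continuous (so lower semi-continuous) for each fixed $\beta$; and $\beta\mapsto\Psi(\lambda,\beta)$ is linear, hence concave, for each fixed $\lambda$. The set $\Delta$ is a convex polytope, while $G_1=[0,\lambda_{\max}]$ is convex and \emph{compact}, the compactness being precisely where the standing boundedness assumption on $\lambda$ enters. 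All conditions of the lemma are therefore met with $\psi=\Psi$, $g_1=\lambda$, and $g_2=\beta$.

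Finally, applying Aubin's minimax theorem gives $\inf_{\lambda}\sup_{\beta}\Psi=\sup_{\beta}\inf_{\lambda}\Psi$, and reading the two sides against the expressions obtained in the first step yields $\mathcal D(P^{\text{aug}}_0,M_B)=\mathcal P(P^{\text{aug}}_0,M_B)$. I expect the only genuinely delicate point to be the first step: justifying that the inner $\max_\mu$ may be replaced by a linear maximization over the fixed, $\lambda$-independent occupation-measure polytope $\Delta$, and that this replacement is simultaneously legitimate for the dual ordering (where $\beta$ is maximized after $\lambda$ is fixed) and for the primal ordering (where $\lambda$ is minimized after the policy, equivalently $\beta_\mu$, is fixed). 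Once the game is in the jointly affine form $\Psi$ over $\Delta\times[0,\lambda_{\max}]$, the minimax exchange itself is routine.
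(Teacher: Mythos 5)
Your proposal follows essentially the same route as the paper's own proof: both dualize the inner policy maximization via the LP formulation of the Bellman equation, carry out $\min_V \mathcal{L}_\lambda(V,\beta)$ to restrict $\beta$ to the occupation-measure (Bellman-flow) polytope on which the payoff $\Psi(\lambda,\beta)$ is jointly affine, invoke Aubin's minimax theorem to exchange $\min_\lambda$ and $\max_\beta$, and close the argument with the occupation-measure/stationary-policy correspondence (the paper's citation of Theorem 3.1 in Altman) to identify the exchanged value with $\mathcal{P}(P^{\text{aug}}_0,M_B)$. Your treatment is in fact slightly more careful on one point---you correctly locate compactness in the bounded interval $[0,\lambda_{\max}]$ furnished by the standing boundedness assumption on $\lambda$, where the paper loosely asserts that $\{\lambda\geq 0\}$ is compact---but this is a refinement of the same argument, not a different one.
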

\begin{proof}
First consider the following expression: 
\[
\begin{split}
&\min_{\lambda\geq 0}\max_{\mu}\mathbb E_{  P^{\text{aug}}}\left(\sum_{k=0}^\infty\gamma^k r_{\lambda}(x_k,i_k,a_k)\mid a_k\sim\mu, P^{\text{aug}}_0\right)\\
=&\min_{V,\lambda\geq 0}\max_{\beta\in\reals^{|\bar{\mathcal X}|\times |\bar{\mathcal A}|}_{\geq 0}}\mathcal L_\lambda(V,\beta)=\min_{\lambda\geq 0}\max_{\beta\in\reals^{|\bar{\mathcal X}|\times |\bar{\mathcal A}|}_{\geq 0}}\min_{V}\mathcal L_\lambda(V,\beta).
\end{split}
\]
The first equality follows from the previous primal arguments of duality theory and the second equality follows from strong duality. Also, consider the following function
\[
\Psi(\lambda,\beta):=\min_{V}\mathcal L_\lambda(V,\beta)= \sum_{(x,i)\in\bar{ \mathcal X},a\in\bar{\mathcal A}}\beta(x,i,a)r_{\lambda}(x,i,a)+\Phi(\beta),
\]
where
\[
\begin{split}
\Phi(\beta)=\min_V&\sum_{x\in\mathcal X}P^{\text{aug}}_0(x,0)V(x,0)+\sum_{x\in \mathcal X,a\in\bar{\mathcal A}}\beta(x,0,a)\left\{ \gamma\sum_{x'\in\mathcal X}\widehat{P}(x'|x,a)V(x',0)- V(x,0)\right\}\\
&+\sum_{x\in\mathcal X}P^{\text{aug}}_0(x,1)V(x,1)+\sum_{x\in \mathcal X,a\in\bar{\mathcal A}}\beta(x,1,a)\left\{ \gamma\sum_{x'\in\mathcal X}P(x'|x,a)V(x',1)- V(x,1)\right\}.
\end{split}
\]
This implies that whenever $\Phi(\beta)$ is bounded, the following condition holds:
\begin{alignat}{1}
P^{\text{aug}}_0(x,1)=&\sum_{a\in\bar{\mathcal A}}\beta(x,1,a)\sum_{x'\in\mathcal X}\delta\{x=x^\prime\}- \gamma P(x'|x,a),\,\,\forall x\in\mathcal X, \label{eq:i_1}\\
P^{\text{aug}}_0(x,0)=&\sum_{a\in\bar{\mathcal A}}\beta(x,0,a)\sum_{x'\in\mathcal X}\delta\{x=x^\prime\}- \gamma \widehat{P}(x'|x,a),\,\,\forall x\in\mathcal X.\label{eq:i_0}
\end{alignat}
Then, one concludes that
\[
\Phi(\beta)=\left\{\begin{array}{cc}
0 & \text{if $P^{\text{aug}}_0(x,i)=\sum_{a\in\bar{\mathcal A}}\beta(x,i,a)\sum_{x',i'}\delta\{x=x^\prime,i=i^\prime\}- \gamma P^{\text{aug}}(x',i'|x,i,a),\,\,\forall x,i$}\\
-\infty & \text{otherwise}
\end{array}\right.
\]
and
\[
\Psi(\lambda,\beta)=\left\{\begin{array}{cc}
 \sum_{x,i,a}\beta(x,i,a)r_{\lambda}(x,i,a)/(1-\gamma) & \text{if $\beta\in\mathcal B$}\\
 -\infty &\text{otherwise} 
 \end{array}\right.
\]
where
\[
\mathcal B=\left\{\beta\in\reals^{|\bar{\mathcal X}|\times |\bar{\mathcal A}|}_{\geq 0}:(1-\gamma)P^{\text{aug}}_0(x,i)=\sum_{a\in\bar{\mathcal A}}\beta(x,i,a)\sum_{x',i'}\delta\{x=x^\prime,i=i^\prime\}- \gamma P^{\text{aug}}(x',i'|x,i,a),\,\,\forall x,i\right\}.
\]
Note that for any $\beta\in\mathcal B$, $\sum_{x,i,a}\beta(x,i,a)=1$.
Since $\Psi(\lambda,\beta)-\lambda M_B$ is a linear function in $\lambda$, it is also a convex and lower semi-continuous function in $\lambda$. Furthermore $\{\lambda\geq 0\}$ is a convex compact set of $\lambda$. On the other hand, $\Psi(\lambda,\beta)$ is a concave function in $\beta$, due to the facts that 1)  $\sum_{(x,i)\in\bar{ \mathcal X},a\in\bar{\mathcal A}}\beta(x,i,a)r_{\lambda}(x,i,a)/(1-\gamma)$ is a linear function of $\beta$ and 2) $\mathcal B$ is a convex set. Therefore, by Aubin's Minimax Theorem, one concludes that
\[
\begin{split}
\min_{\lambda\geq 0}\max_{\beta\in\mathcal B}\min_{V}\mathcal L_\lambda(V,\beta)-\lambda M_B=&\min_{\lambda\geq 0}\max_{\beta\in\mathcal B}\Psi(\lambda,\beta)-\lambda M_B\\
=&\max_{\beta\in\mathcal B}\min_{\lambda\geq 0}\Psi(\lambda,\beta)-\lambda M_B=\max_{\beta\in\mathcal B}\min_{\lambda\geq 0}\min_{V}\mathcal L_\lambda(V,\beta)-\lambda M_B.
\end{split}
\]

The final step is to prove 
\[
\max_{\beta\in\mathcal B}\min_{\lambda\geq 0}\sum_{(x,i)\in\bar{ \mathcal X},a\in\bar{\mathcal A}}\beta(x,i,a)\frac{r_{\lambda}(x,i,a)}{1-\gamma}-\lambda M_B=\max_{\mu}\min_{\lambda\geq 0}\,\mathbb E_{  P^{\text{aug}}}\left(\sum_{k=0}^\infty\gamma^k r_{\lambda}(x_k,i_k,a_k)\mid a_k\sim\mu, P^{\text{aug}}_0\right)-\lambda M_B.
\]
The proof of this statement follows directly from Theorem 3.1 in \cite{altman1999constrained}. 
\end{proof}
We have just shown that strong duality holds in this optimization problem. Thus, in the following sections, we will mainly focus on deriving algorithms to solve the dual problem described in $\mathcal D(P^{\text{aug}}_0,M_B)$.

\section{A Condition for Guaranteeing Feasibility}\label{sec:suff_cond}
Recall the risk neutral maximization problem is equivalent to its primal Lagrangian formulation 
\[
\max_{\mu} \min_{\lambda\geq 0} L(\mu,\lambda).
\]
However when the MDP $M$ is unknown, one cannot directly solve this problem. Therefore, we want to derive a sufficient condition to guarantee feasibility based on solving a similar problem using MDP $\widehat{M}$. Note that the risk neutral maximization problem is feasible if $\max_{\mu} \min_{\lambda\geq 0} L(\mu,\lambda)$ is lower bounded. Therefore if there exists a function $\widehat{L}(\mu,\lambda)$ such that $\widehat{L}(\mu,\lambda)\leq  L(\mu,\lambda)$, then 
\[
\min_{ \lambda\geq 0} L(\mu,\lambda):=L(\mu,\lambda^*)\geq  \widehat{L}(\mu,\lambda^*)\geq \min_{\lambda\geq 0} \widehat{L}(\mu,\lambda).
\]
By taking maximization over $\mu$ on both sides, one further obtains
\[
\max_{\mu}\min_{ \lambda\geq 0} L(\mu,\lambda)\geq \max_{\mu}\min_{\lambda\geq 0} \widehat{L}(\mu,\lambda).
\]
Therefore, if $\max_{\mu}\min_{\lambda\geq 0}\widehat{L}(\mu,\lambda)$ is lower bounded, one concludes that the original problem is always feasible. Now, consider the following construction of $\widehat{L}(\mu,\lambda)$.

\begin{lemma}
Define
\[
\widehat{L}(\mu,\lambda):=\mathbb E_{{\widehat{P}}^{\text{aug}}}\left(\sum_{k=0}^\infty\gamma^k \widehat{r}_{\lambda}(x_k,i_k,a_k)\mid a_k\sim\mu, P^{\text{aug}}_0\right)-\lambda M_B ,
\]
where
\[
\widehat{r}_{\lambda}(x,i,a)=\left\{
\begin{array}{cc}
2 r(x,a)&\text{if $i=0$}\\
2 \lambda r(x,a)-\frac{  2\gamma\lambda R_{\max}}{1-\gamma}e(x,a)&\text{if $i=1$}\\
\end{array}\right.,\,\,\forall (x,i)\in\bar{\mathcal{X}},\,\, a\in\bar{\mathcal{A}}.
\]
Notice that $\widehat{P}^{\text{aug}}$ is the transition probability in the augmented state and action spaces such that its definition is similar to the definition of ${P}^{\text{aug}}$, except by replacing $P$ by $\widehat{P}$.

Then for any stationary Markovian policy $\mu$ and Lagrangian multiplier $\lambda\geq 0$, the following inequality holds: 
\[
\begin{split}
 &L(\mu,\lambda)-\widehat{L}(\mu,\lambda)\\
 =&\mathbb E_{  P^{\text{aug}}}\left(\sum_{k=0}^\infty\gamma^k r_{\lambda}(x_k,i_k,a_k)\mid a_k\sim\mu, P^{\text{aug}}_0\right)-\mathbb E_{{\widehat{P}}^{\text{aug}}}\left(\sum_{k=0}^\infty\gamma^k  \widehat{r}_{\lambda}(x_k,i_k,a_k)\mid a_k\sim\mu, P^{\text{aug}}_0\right)\geq  0.
 \end{split}
\]
\end{lemma}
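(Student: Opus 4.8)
The plan is to first collapse the difference $L(\mu,\lambda)-\widehat L(\mu,\lambda)$ onto the single branch where the two models actually disagree, and then to control that branch by a value-perturbation (simulation) argument. Writing both objectives in their augmented form, the $-\lambda M_B$ terms cancel, and on the branch $i=0$ both $L$ and $\widehat L$ use the simulated kernel $\widehat P$ together with the same reward, so this branch cancels identically. What remains is only the $i=1$ contribution. Using the factor $2$ in the rewards against the factor $\tfrac12$ in $P^{\text{aug}}_0$, the remaining difference reduces to
\[
L(\mu,\lambda)-\widehat L(\mu,\lambda)=\lambda\Big(V^\mu_P(x^0)-V^\mu_{\widehat P}(x^0)\Big)+\frac{\gamma\lambda R_{\max}}{1-\gamma}\,\mathbb E_{\widehat P}\Big(\sum_{k=0}^\infty\gamma^k e(x_k,a_k)\mid a_k\sim\mu,P_0\Big),
\]
where $V^\mu_P$ and $V^\mu_{\widehat P}$ denote the discounted value of $\mu$ (with reward $r$) under the true and the simulated kernels. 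Since $\lambda\geq 0$, it suffices to prove the single inequality $V^\mu_{\widehat P}(x^0)-V^\mu_P(x^0)\le \tfrac{\gamma R_{\max}}{1-\gamma}\,\mathbb E_{\widehat P}\big(\sum_{k=0}^\infty\gamma^k e(x_k,a_k)\mid a_k\sim\mu,P_0\big)$.

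Next I would establish this bound by a value-difference identity. Subtracting the two policy-evaluation Bellman equations for $V^\mu_P$ and $V^\mu_{\widehat P}$, and adding and subtracting $\gamma\,\widehat P^\mu V^\mu_P$, I obtain for $\Delta:=V^\mu_{\widehat P}-V^\mu_P$ the recursion $\Delta=\gamma\,\widehat P^\mu\Delta+b$, with the one-step mismatch term $b(x)=\gamma\sum_{y}\big(\widehat P^\mu(y|x)-P^\mu(y|x)\big)V^\mu_P(y)$, where $\widehat P^\mu$ and $P^\mu$ denote the $\mu$-induced transition kernels. Inverting $(I-\gamma\widehat P^\mu)$, which is valid since $\gamma<1$ and $\widehat P^\mu$ is stochastic (the same contraction underlying the Proposition), gives the closed form $\Delta(x^0)=\mathbb E_{\widehat P}\big(\sum_{k=0}^\infty\gamma^k b(x_k)\mid a_k\sim\mu,P_0\big)$. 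The expectation is taken under the simulated kernel $\widehat P$, which is exactly why $\widehat L$ was constructed with $\widehat P^{\text{aug}}$ on the $i=1$ branch.

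Finally I would bound $b$ pointwise. Because the rewards lie in $[-R_{\max},R_{\max}]$ one has $\|V^\mu_P\|_\infty\le R_{\max}/(1-\gamma)$, so $|b(x)|\le \tfrac{\gamma R_{\max}}{1-\gamma}\,\|\widehat P^\mu(\cdot|x)-P^\mu(\cdot|x)\|_1$; by the triangle inequality together with the mis-measure bound $e$, $\|\widehat P^\mu(\cdot|x)-P^\mu(\cdot|x)\|_1\le\sum_{a}\mu(a|x)\,\|\widehat P(\cdot|x,a)-P(\cdot|x,a)\|_1\le\sum_{a}\mu(a|x)\,e(x,a)$. Substituting into the closed form for $\Delta(x^0)$ and folding $\sum_a\mu(a|x_k)e(x_k,a)$ back into the action expectation yields precisely the required inequality, whence $L-\widehat L\ge 0$.

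The step I expect to be the main obstacle is anchoring the value-difference identity at the correct kernel. The purpose of the penalized reward $\widehat r_\lambda$ and the use of $\widehat P^{\text{aug}}$ (rather than $P^{\text{aug}}$) in $\widehat L$ is that the error must be accumulated along trajectories of the \emph{simulated} model, since $P$ is unknown and hence the visiting distribution under $P$ is unavailable. Keeping the perturbation expansion based at $\widehat P^\mu$ — rather than the symmetric-looking but uncomputable expansion at $P^\mu$ — is what makes the bound both correct and usable, and some care is needed so that the leftover $\gamma$ factor and the $R_{\max}/(1-\gamma)$ value bound match the constant $\tfrac{2\gamma\lambda R_{\max}}{1-\gamma}$ appearing in $\widehat r_\lambda$.
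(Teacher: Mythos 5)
Your proof is correct, and it lands on the paper's exact constant: since the $i$-coordinate is absorbing and the $i=1$ branch carries mass $\tfrac12$ under $P^{\text{aug}}_0$, your penalty $\tfrac{\gamma\lambda R_{\max}}{1-\gamma}\,\mathbb E_{\widehat P}\bigl[\sum_k\gamma^k e(x_k,a_k)\bigr]$ coincides with the paper's $\tfrac{2\gamma\lambda R_{\max}}{1-\gamma}\,\mathbb E_{\widehat P^{\text{aug}}}\bigl[\sum_k\gamma^k e(x_k,a_k)\mathbf 1\{i_k=1\}\bigr]$ from \eqref{eq:diff_1}.

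The route differs from the paper's in a genuine way, even though the central identity is the same simulation-lemma expansion with the same anchoring. The paper never collapses the two branches: it stays on the augmented chain and works in the dual (occupation-measure) picture, deriving the balance equation \eqref{eq:feas_1}, inverting it to get
\[
d_{P^{\text{aug}},\mu}-d_{\widehat P^{\text{aug}},\mu}=\bigl(I-\gamma P^{\text{aug}}_{\mu}\bigr)^{-\top}\gamma\bigl((P^{\text{aug}}_\mu)^\top-(\widehat P^{\text{aug}}_\mu)^\top\bigr)d_{\widehat P^{\text{aug}},\mu},
\]
and then pairing with the induced reward $r_{\lambda,\mu}$ to obtain \eqref{eq:suff_ineq}. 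Your recursion $\Delta=\gamma\widehat P^\mu\Delta+b$ with $b=\gamma(\widehat P^\mu-P^\mu)V^\mu_P$, solved as $\Delta=(I-\gamma\widehat P^\mu)^{-1}b$, is precisely the transpose of that identity, restricted to the $i=1$ branch of the base MDP. Both versions make the choice you flag as essential: the resolvent of the \emph{simulated} kernel sits on the trajectory side (so the error accrues under $\widehat P$, whose occupation measure is computable) while the \emph{true}-kernel value function sits on the bounded side, controlled by $\|V^\mu_P\|_\infty\leq R_{\max}/(1-\gamma)$ together with the 1-norm mis-measure bound $e(x,a)$ — in the paper this appears as the bound $2\lambda R_{\max}$ on $r_\lambda$ at $i=1$. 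What your packaging buys: collapsing the $i=0$ branch first (where $P^{\text{aug}}$ and $\widehat P^{\text{aug}}$ agree and the rewards cancel) reduces everything to the scalar inequality $V^\mu_{\widehat P}(x^0)-V^\mu_P(x^0)\leq\tfrac{\gamma R_{\max}}{1-\gamma}\mathbb E_{\widehat P}\bigl[\sum_k\gamma^k e(x_k,a_k)\bigr]$, strips out the augmented-MDP notation entirely, disposes of $\lambda=0$ trivially, and is easier to audit. What the paper's dual form buys: the occupation-measure identity is stated once for the full augmented chain and is then reused verbatim, with the $\mathbf 1\{i_k=1\}$ weighting intact, in the performance analysis (the derivations around \eqref{eq:suff_ineq} and \eqref{eq:diff_V} in Lemma \ref{lem:perf}). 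One cosmetic remark: you implicitly read $r_\lambda(x,0,a)=2r(x,a)$ without the stray indicator $\mathbf 1\{x=x_T\}$ appearing in the paper's definition of $r_\lambda$; that indicator is evidently a leftover from the expected-utility extension, and the paper's own proof makes the same reading, so this is not a gap in your argument.
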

\begin{proof}
For any transition probability $P^{\text{aug}}$ and any Markov policy $\mu$ of the augmented MDP, define 
\[
d^\mu_{ P^{\text{aug}},\gamma}(x,i)=\sum_{k=0}^\infty \gamma^k(1-\gamma)\sum_{a\in\mathcal A}\mathbb P_{   P^{\text{aug}}}(x_k=x,i_k=i|x_0=x^0,i_0=i^0,\mu).
\]
where the occupation measure satisfies the following expression:
\begin{equation}\label{eq:feas_1}
\begin{split}
&\gamma\sum_{(x^\prime,i^\prime)\in \bar{\mathcal{X}}}d^\mu_{ P^{\text{aug}},\gamma}(x',i')\sum_{a^\prime\in\bar{\mathcal A}}
P^{\text{aug}}(x,i|x^\prime,i^\prime,a^\prime)\mu(a^\prime|x^\prime,i^\prime)\\
=&(1-\gamma)\sum_{(x^\prime,i^\prime)\in \bar{\mathcal{X}}}\sum_{a^\prime\in \bar{\mathcal A}}\sum_{k=0}^\infty \gamma^{k+1}\mathbb P_{  P^{\text{aug}}}(x_k=x',i_k=i'|x_0=x^0,i_0=i^0,\mu)  P^{\text{aug}}(x,i|x^\prime,i^\prime,a^\prime)\mu(a^\prime|x^\prime,i^\prime)\\
=&(1-\gamma)\sum_{k=0}^\infty \gamma^{k+1}\mathbb P_{  P^{\text{aug}}}(x_{k+1}=x,i_{k+1}=i|x_0=x^0,i_0=i^0,\mu)\\
=&(1-\gamma)\sum_{k=0}^\infty \gamma^k\mathbb P_{  P^{\text{aug}}}(x_{k}=x,i_k=i|x_0=x^0,i_0=i^0,\mu)-(1-\gamma)\mathbf 1\{x_0=x,i_0=i\}\\
=&d^\mu_{ P^{\text{aug}},\gamma}(x,i)-(1-\gamma)\mathbf 1\{x_0=x,i_0=i\}.
\end{split}
\end{equation}
Based on the definition of occupation measures, one can easily write
\[
\mathbb E_{ P^{\text{aug}}}\left(\sum_{k=0}^\infty\gamma^k r_{\lambda}(x_k,i_k,a_k)\mid a_k\sim\mu,x_0=x^0,i_0=i^0\right)=\sum_{(x',i')\in\bar{\mathcal X}}d^\mu_{ P^{\text{aug}},\gamma}(x',i')\sum_{a'\in\bar{\mathcal A}}\mu(a'|x',i')r_\lambda(x',i',a').
\]
Define the transition probability matrix $  P^{\text{aug}}_{\mu}$ at the $\{(x,i),(x^\prime,i^\prime)\}$ as 
\[
  P^{\text{aug}}_{\mu}((x,i),(x^\prime,i^\prime)):=   P^{\text{aug}}(x^\prime,i^\prime|x,i,\mu(x,i)),
\]
and notice that
\[
(I-\gamma  P^{\text{aug}}_{\mu})\sum_{k=0}^\infty\left(\gamma  P^{\text{aug}}_{\mu}\right)^k=\sum_{k=0}^\infty\left(\gamma  P^{\text{aug}}_{\mu}\right)^k-\left(\gamma  P^{\text{aug}}_{\mu}\right)^{k+1}=I-\lim_{k\rightarrow\infty}\left(\gamma  P^{\text{aug}}_{\mu}\right)^{k}=I.
\]
Therefore, the above equation implies $(I-\gamma  P^{\text{aug}}_{\mu})^{-1}=\sum_{t=0}^\infty\left(\gamma  P^{\text{aug}}_{\mu}\right)^k<\infty$. The series is summable because $\sigma(\gamma  P^{\text{aug}}_{\mu})<1$. Furthermore, one obtains
\[
\left(I-\gamma  P^{\text{aug}}_{\mu}\right)^{-1}((x,i),(x^\prime,i^\prime))=\sum_{k=0}^\infty \gamma^k\mathbb P(x_{k}=x',i_{k}=i'|x_0=x,i_0=i,\mu).
\]
Define $d^\mu_{ P^{\text{aug}},\gamma}$ as the vector $\{d^\mu_{ P^{\text{aug}},\gamma}(x,i)\}_{(x,i)\in\bar{\mathcal X}}$, expression \eqref{eq:feas_1} can be rewritten as follows:
\[
\left(I-\gamma  P^{\text{aug}}_{\mu}\right)^\top d^\mu_{ P^{\text{aug}},\gamma}=\{\mathbf 1\{x_0=x,i_0=i\}\}_{(x,i)\in\bar{\mathcal X}}.
\] 
On the other hand, by defining ${\widehat{P}}^{\text{aug}}_{\mu}((x,i),(x^\prime,i^\prime)):= {\widehat{P}}^{\text{aug}}(x^\prime,i^\prime|x,i,\mu(x,i))$, we can also write
\[
\left(I-\gamma{\widehat{P}}^{\text{aug}}_{\mu}\right)^\top d_{{\widehat{P}}^{\text{aug}},\mu}=\{\mathbf 1\{x_0=x,i_0=i\}\}_{(x,i)\in\bar{\mathcal X}}.
\]
Combining the above expressions implies
\[
d_{P^{\text{aug}},\mu}-d_{{\widehat{P}}^{\text{aug}},\mu}-\gamma \left((P^{\text{aug}}_\mu)^\top d_{P^{\text{aug}},\mu}-({\widehat{P}}^{\text{aug}}_\mu)^\top d_{{\widehat{P}}^{\text{aug}},\mu}\right)=0,
\]
which further implies
\[
\begin{split}
& \left(I-\gamma  P^{\text{aug}}_{\mu}\right)^\top\left(d_{P^{\text{aug}},\mu}-d_{{\widehat{P}}^{\text{aug}},\mu}\right)=\gamma \left((P^{\text{aug}}_\mu)^\top -({\widehat{P}}^{\text{aug}}_\mu)^\top \right)d_{{\widehat{P}}^{\text{aug}},\mu}\\
\implies & \left(d_{P^{\text{aug}},\mu}-d_{{\widehat{P}}^{\text{aug}},\mu}\right)=\left(I-\gamma  P^{\text{aug}}_{\mu}\right)^{-\top}\gamma \left((P^{\text{aug}}_\mu)^\top -({\widehat{P}}^{\text{aug}}_\mu)^\top \right)d_{{\widehat{P}}^{\text{aug}},\mu}.
\end{split}
\]

For any policy $\mu$, one notices that
\[
\begin{split}
&(1-\gamma)\left(\mathbb E_{  P^{\text{aug}}}\left(\sum_{k=0}^\infty\gamma^k r_{\lambda}(x_k,i_k,a_k)\mid a_k\sim\mu, P^{\text{aug}}_0\right)-\mathbb E_{{\widehat{P}}^{\text{aug}}}\left(\sum_{k=0}^\infty\gamma^k r_{\lambda}(x_k,i_k,a_k)\mid a_k\sim\mu, P^{\text{aug}}_0\right)\right)\\
= &\sum_{x^0,i^0}P^{\text{aug}}_0(x^0,i^0)\sum_{(x',i')\in\bar{\mathcal X}}\left(d_{P^{\text{aug}},\mu}(x',i')-d_{{\widehat{P}}^{\text{aug}},\mu}(x',i')\right)\sum_{a'\in\bar{\mathcal A}}\mu(a'|x',i') r_\lambda(x',i',a')\\
=&\sum_{x^0,i^0}P^{\text{aug}}_0(x^0,i^0)  r_{\lambda,\mu}^T\left(I-\gamma  P^{\text{aug}}_{\mu}\right)^{-\top}\gamma \left((P^{\text{aug}}_\mu)^\top -({\widehat{P}}^{\text{aug}}_\mu)^\top \right)d_{{\widehat{P}}^{\text{aug}},\mu}
\end{split}
\]
where $ r_{\lambda,\mu}(x,i)= r_\lambda(x,i,\mu(x,i))$ is the reward functions induced by policy $\mu$. Furthermore, the following expressions can be easily obtained by using the properties of dual MDPs:
\[
\begin{split}
\left\{\left(I-\gamma  P^{\text{aug}}_{\mu}\right)^{-1} r_{\lambda,\mu}\right\}(x,i)=&\sum_{(x',i')\in\bar{\mathcal X}}\sum_{k=0}^\infty \gamma^k\mathbb P_{  P^{\text{aug}}}(x_{k}=x',i_{k}=i'|x_0=x,i_0=i,\mu) r_{\lambda,\mu}(x',i'),\\
=&\mathbb E_{ P^{\text{aug}}}\left[\sum_{k=0}^\infty\gamma^k  r_{\lambda,\mu}(x_k,i_k)|x_0=x,i_0=i,\mu\right],
\end{split}
\]
and
\[
\begin{split}
&\left\{\left((P^{\text{aug}}_\mu)^\top -({\widehat{P}}^{\text{aug}}_\mu)^\top \right)d_{{\widehat{P}}^{\text{aug}},\mu}\right\}(x',i')\\
=&\sum_{(x,i)\in\bar{\mathcal X}}\sum_{k=0}^\infty \gamma^k(1-\gamma)\mathbb P_{{\widehat{P}}^{\text{aug}}}(x_k=x,i_k=i|x_0=x^0,i_0=i^0,\mu)\sum_{a\in\bar{\mathcal A}}\left( P^{\text{aug}}(x',i'|x,i,a)-{\widehat{P}}^{\text{aug}}(x',i'|x,i,a)\right)\mu(a|x,i)\\
=&\mathbb E_{{\widehat{P}}^{\text{aug}}}\left[\sum_{k=0}^\infty \gamma^k(1-\gamma)\left( P^{\text{aug}}_{\mu}(x',i'|x_k,i_k)-{\widehat{P}}^{\text{aug}}_{\mu}(x',i'|x_k,i_k)\right)|x_0=x^0,i_0=i^0,\mu\right].
\end{split}
\]
By using the above results, the following expression holds:
\begin{equation}\label{eq:suff_ineq}
\begin{split}
&\mathbb E_{  P^{\text{aug}}}\left(\sum_{k=0}^\infty\gamma^k r_{\lambda}(x_k,i_k,a_k)\mid a_k\sim\mu, P^{\text{aug}}_0\right)-\mathbb E_{{\widehat{P}}^{\text{aug}}}\left(\sum_{k=0}^\infty\gamma^k r_{\lambda}(x_k,i_k,a_k)\mid a_k\sim\mu, P^{\text{aug}}_0\right)\\
= &\sum_{x^0,i^0}P^{\text{aug}}_0(x^0,i^0)\Bigg\{ \gamma\sum_{(x,i)\in\bar{\mathcal X}}\mathbb E_{ P^{\text{aug}}}\left[\sum_{k=0}^\infty\gamma^k  r_{\lambda,\mu}(x_k,i_k)|x_0=x,i_0=i,\mu\right]\cdot\\
&\quad\quad\quad \mathbb E_{{\widehat{P}}^{\text{aug}}}\left[\sum_{k=0}^\infty \gamma^k\left(  P^{\text{aug}}_{\mu}(x,i|x_k,i_k)-{\widehat{P}}^{\text{aug}}_{\mu}(x,i|x_k,i_k)\right)|x_0=x^0,i_0=i^0,\mu\right]\\
\geq & -\frac{  2\gamma\lambda R_{\max}}{1-\gamma}\mathbb E_{{\widehat{P}}^{\text{aug}}}\left[\sum_{k=0}^\infty \gamma^k e(x_k,a_k)\mathbf 1\{i_k=1\}|a_k\sim\mu, P^{\text{aug}}_0\right]
\end{split}
\end{equation}
where $2\lambda R_{\max}$ is the upper bound for reward function $r_{\lambda}(x,i,a)$ at $i=1$ with $\lambda\geq 0$.

Thus, by noticing that
\begin{equation}\label{eq:diff_1}
\begin{split}
&\mathbb E_{{\widehat{P}}^{\text{aug}}}\left(\sum_{k=0}^\infty\gamma^k r_{\lambda}(x_k,i_k,a_k)\mid a_k\sim\mu, P^{\text{aug}}_0\right)-\frac{ 2\gamma\lambda R_{\max}}{1-\gamma}\mathbb E_{{\widehat{P}}^{\text{aug}}}\left[\sum_{k=0}^\infty \gamma^k e(x_k,a_k)\mathbf 1\{i_k=1\}|a_k\sim\mu, P^{\text{aug}}_0\right]\\
=&\mathbb E_{{\widehat{P}}^{\text{aug}}}\left(\sum_{k=0}^\infty\gamma^k  \widehat{r}_{\lambda}(x_k,i_k,a_k)\mid a_k\sim\mu, P^{\text{aug}}_0\right),
\end{split}
\end{equation}
and recalling the definition of $\widehat{L}(\mu,\lambda)$, one concludes that $L(\mu,\lambda)-\widehat{L}(\mu,\lambda)\geq 0$. 
\end{proof}
By solving for $\max_{\mu}\min_{\lambda\geq 0} \widehat{L}(\mu,\lambda)$ and guaranteeing that it is lower bounded, one can ensure feasibility of the risk neutral maximization problem. 

\section{Algorithm}\label{sec:algorithm}
We first state the following algorithm for finding the saddle point $(\widehat{\mu},\widehat{\lambda})$ of the max-min optimization problem $\max_{\mu}\min_{\lambda\geq 0} \widehat{L}(\mu,\lambda)$.
\begin{algorithm}
\begin{algorithmic}
\STATE {\bf Input:} threshold $M_B$
%Let $x^0\in\X$ be an initial state, $\alpha\in(0,1)$ be the given CVaR parameter and $K$ be the given threshold parameter. 
\STATE {\bf Initialization:} policy parameter $\mu=\mu^{(0)}$, Lagrangian parameter $\lambda=\lambda^{(0)}$ and initial dual function estimate $f_{\text{min}}^{(0)}$
\FOR{$j = 0,1,2,\ldots$}
\STATE $q\leftarrow 0$.
\WHILE{TRUE}
\STATE Policy evaluation: For given policy $\mu^{(q)}$, compute the value function $\widehat{V}^{\mu^{(q)}}_{\lambda^{(j)}}$ by solving the following optimization problem,
\begin{alignat*}{2}
\min_{V}& &\quad &\sum_{x,i} P^{\text{aug}}_0(x,i) V(x,i)\label{problem_phi_3}\\
&\text{s.t.} &\quad & \widehat{r}_{\lambda^{(j)}}(x,i,\mu^{(q)}(x,i))+\gamma\sum_{(x',i')\in\bar{\mathcal X}} \widehat{P}(x'|x,\mu^{(q)}(x,i))V(x',i')\mathbf 1\{i=i'\}\leq V(x,i),\,\,\forall (x,i)\in\bar{\mathcal X}.\nonumber
\end{alignat*}
\STATE Policy improvement:
\begin{equation*}
\mu^{(q+1)}(x,i)\!\in\!\arg\!\max_{a\in \mathcal A}\left\{\! \widehat{r}_{\lambda^{(j)}}(x,i,a)\!+
\sum_{(x',i')\in\bar{\mathcal X}} \widehat{P}(x'|x,a)\widehat{V}^{\mu^{(q)}}_{\lambda^{(j)}}(x',i')\mathbf 1\{i=i'\}\leq \widehat{V}^{\mu^{(q)}}_{\lambda^{(j)}}(x,i)\right\}.\!\!
\end{equation*}
\STATE $q\leftarrow q+1$.
\IF{$\widehat{V}^{\mu^{(q)}}_{\lambda^{(j)}}(x,i)=  \widehat{V}^{\mu^{(q+1)}}_{\lambda^{(j)}}(x,i),\,\,\forall (x,i)\in\bar{\mathcal X}$}
\STATE{{\bf return} $\widehat{\mu}_{\lambda^{(j)}}=\mu^{(q)}$ and {\bf break}.}
\ENDIF
\ENDWHILE  
Lagrange multiplier update:
\begin{equation*}
\lambda^{(j+1)}=\left(\lambda^{(j)}-\alpha^{(j)} \left(
\mathbb E_{\widehat{P}}\left(\sum_{k=0}^\infty\gamma^k  \left( r(x_k,a_k)-\frac{  \gamma R_{\max}}{1-\gamma}e(x_k,a_k)\right)\mid a_k\sim\widehat{\mu}_{\lambda^{(j)}},P_0\right)- M_B
\right)\right)^+
\end{equation*}
 where the step length $\alpha^{(j)}$ satisfies the following conditions:
\begin{equation}\label{step_length_cond}
\alpha^{(j)}\geq 0,\,\,\sum_{j=0}^\infty\alpha^{(j)}=\infty,\,\,\sum_{j=0}^\infty\left(\alpha^{(j)}\right)^2<\infty.
\end{equation}
\STATE Best dual function estimate update: $f_{\text{min}}^{(j+1)}=\min\left(f_{\text{min}}^{(j)},\widehat{L}(\widehat{\mu}_{\lambda^{(j+1)}},\lambda^{(j+1)})\right)$.
\STATE Best Lagrange multiplier update: 
\begin{small}
\[
\lambda^{(j+1)}\leftarrow\left\{\begin{array}{ll}
\lambda^{(j+1)}&\text{if  $f_{\text{min}}^{(j+1)}=\widehat{L}(\widehat{\mu}_{\lambda^{(j+1)}},\lambda^{(j+1)})$}\\
\lambda^{(j)}&\text{otherwise}
\end{array}\right..
\]
\end{small}
\STATE Terminate algorithm when $\lambda^{(j)}$ converges to $\widehat{\lambda}$.
\ENDFOR
\end{algorithmic}
\end{algorithm}
The convergence analysis of this algorithm is given in the following theorem. 
\begin{theorem}[Convergence]\label{thm:convergence}
The policy iteration algorithm terminates in a finite number of steps. Furthermore, 
the Markov stationary control policy $\widehat{\mu}:=\widehat{\mu}_{\widehat{\lambda}}$ is optimal to the risk neutral optimization problem 
\begin{alignat*}{2}
\max_{\mu\in\Pi_S}& &\quad &\mathbb E_{\widehat{P}}\left(\sum_{k=0}^\infty\gamma^k r(x_k,a_k)\mid a_k\sim\mu,P_0\right)\\  
\text{subject to} & & \quad & \mathbb E_{\widehat{P}}\left(\sum_{k=0}^\infty\gamma^k r(x_k,a_k)\mid a_k\sim\mu,P_0\right)\geq M_B,
\end{alignat*}
when this algorithm terminates and
\[
\sum_{x,i} P^{\text{aug}}_0(x,i) \widehat{V}^{\widehat{\mu}}_{\widehat{\lambda}}(x,i)-\widehat{\lambda}M_B=\widehat{\mathcal P}(P^{\text{aug}}_0,M_B),
\]
where
\[
\widehat{\mathcal P}(P^{\text{aug}}_0,M_B)=\max_{\mu}\min_{\lambda\geq 0}\,\mathbb E_{  \widehat{P}^{\text{aug}}}\left(\sum_{k=0}^\infty\gamma^k \widehat{r}_{\lambda}(x_k,i_k,a_k)\mid a_k\sim\mu, P^{\text{aug}}_0\right)-\lambda M_B=\max_{\mu}\min_{\lambda\geq 0} \widehat{L}(\mu,\lambda).
\]
\end{theorem}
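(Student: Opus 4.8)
The plan is to separate the two nested loops of the algorithm. For a fixed multiplier $\lambda^{(j)}$ the inner loop is exactly Howard's policy iteration on the finite simulated augmented MDP with reward $\widehat{r}_{\lambda^{(j)}}$ and transition $\widehat{P}^{\text{aug}}$, while the outer loop is a projected subgradient method on the dual variable $\lambda$. I would first settle the finite-termination claim, which concerns the inner loop; then analyze convergence of the outer loop; and finally invoke the strong-duality theorem together with a saddle-point/complementary-slackness argument to obtain both the optimality of $\widehat{\mu}$ and the value identity.

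\emph{Finite termination of the inner loop.} The inner iteration alternates policy evaluation (solving a linear program that returns $\widehat{V}^{\mu^{(q)}}_{\lambda^{(j)}}$) with greedy policy improvement, which is precisely policy iteration for $\widehat{M}^{\text{aug}}_{\lambda^{(j)}}$. Using the Monotonicity and Contraction properties of the Bellman operator from the Proposition, together with the Dominating Policies theorem, the evaluated value functions form a monotonically nondecreasing sequence, and each improvement step strictly increases $\widehat{V}^{\mu^{(q)}}_{\lambda^{(j)}}$ at some state unless the greedy policy already attains the fixed point, which is exactly the stopping test. Since $\bar{\mathcal X}\times\bar{\mathcal A}$ is finite there are only finitely many deterministic stationary policies, and strict monotone improvement forbids repetition, so the loop halts after at most $|\bar{\mathcal A}|^{|\bar{\mathcal X}|}$ iterations, returning $\widehat{\mu}_{\lambda^{(j)}}\in\arg\max_\mu\widehat{L}(\mu,\lambda^{(j)})$.

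\emph{Outer loop.} Because the augmented index $i$ is invariant under $\widehat{P}^{\text{aug}}$, the copies $i=0$ and $i=1$ decouple and $\widehat{L}(\mu,\lambda)=J(\mu)+\lambda\big(\widehat c(\mu)-M_B\big)$, where $J(\mu)=\mathbb E_{\widehat P}[\sum_k\gamma^k r(x_k,a_k)\mid\mu,P_0]$ and $\widehat c(\mu)=\mathbb E_{\widehat P}[\sum_k\gamma^k(r(x_k,a_k)-\frac{\gamma R_{\max}}{1-\gamma}e(x_k,a_k))\mid\mu,P_0]$ is exactly the quantity driving the Lagrange update. The dual function $g(\lambda)=\max_\mu\widehat{L}(\mu,\lambda)$ is therefore a maximum of affine functions of $\lambda$, hence convex and piecewise linear, and by Danskin's theorem $\widehat c(\widehat\mu_{\lambda})-M_B$ is a subgradient of $g$ at $\lambda$. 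The Lagrange update is thus projected subgradient descent for $\min_{\lambda\geq 0}g(\lambda)$; under the Robbins--Monro conditions \eqref{step_length_cond} and the best-iterate bookkeeping through $f_{\text{min}}^{(j)}$, standard subgradient convergence gives $f_{\text{min}}^{(j)}\to\min_{\lambda\geq0}g(\lambda)$ and $\lambda^{(j)}\to\widehat\lambda\in\arg\min_{\lambda\geq0}g(\lambda)$. Feasibility of the constrained problem (Slater's condition, already used for strong duality) forces the minimizer to be finite, which justifies the earlier boundedness assumption on $\lambda$.

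\emph{Optimality and value identity.} At the limit, $g(\widehat\lambda)=\min_{\lambda\geq0}\max_\mu\widehat{L}(\mu,\lambda)$, which by applying the strong-duality theorem to the simulated problem equals $\widehat{\mathcal P}(P^{\text{aug}}_0,M_B)=\max_\mu\min_{\lambda\geq0}\widehat{L}(\mu,\lambda)$. Moreover, by the analogue of \eqref{eq:opt_cond} for $\widehat{M}^{\text{aug}}_{\widehat\lambda}$ we have $g(\widehat\lambda)=\widehat{L}(\widehat\mu,\widehat\lambda)=\sum_{x,i}P^{\text{aug}}_0(x,i)\widehat V^{\widehat\mu}_{\widehat\lambda}(x,i)-\widehat\lambda M_B$, which is the claimed identity. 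Finally $(\widehat\mu,\widehat\lambda)$ is a saddle point of $\widehat L$, so complementary slackness $\widehat\lambda\,(\widehat c(\widehat\mu)-M_B)=0$ together with primal feasibility $\widehat c(\widehat\mu)\geq M_B$ shows that $\widehat\mu$ maximizes $J(\mu)$ subject to the return constraint $\widehat c(\mu)\geq M_B$, i.e.\ it solves the displayed constrained program. I expect the main obstacle to be this outer-loop step: one must verify rigorously that the greedy policy returned by the inner loop yields a genuine subgradient of the nonsmooth $g$, then combine diminishing-step-size subgradient convergence with the best-iterate tracking to pin the limit to the dual minimizer; the concluding saddle-point/KKT argument needs care because $\arg\max_\mu\widehat{L}(\mu,\lambda)$ may be set-valued precisely at the kinks of $g$, where the complementary-slackness identity must be read off the correct maximizer.
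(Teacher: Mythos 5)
Your proposal is correct and follows essentially the same three-step route as the paper's own proof: monotone exact policy iteration on the simulated augmented MDP for the inner loop, projected subgradient descent on the convex piecewise-linear dual $f(\lambda)=\max_\mu \widehat{L}(\mu,\lambda)$ with the Robbins--Monro step sizes and best-iterate tracking for the outer loop, and strong duality to equate $\widehat{\mathcal D}$ with $\widehat{\mathcal P}$. The only differences are cosmetic additions on your side --- the explicit $|\bar{\mathcal A}|^{|\bar{\mathcal X}|}$ termination count, the invocation of Danskin's theorem in place of the paper's ``sub-gradient calculus,'' and the concluding complementary-slackness argument that the paper leaves implicit --- none of which changes the underlying argument.
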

\begin{proof}
Define the Bellman operators with respect to $\widehat{P}^{\text{aug}}$ and $\widehat{r}_\lambda$ as follows:
\[
\widehat{T}_{\lambda}[V](x,i)=\max_{a\in\bar{\mathcal A}}\left\{\widehat{r}_{\lambda}(x,i,a)+\gamma\sum_{(x^\prime, i^\prime)\in\bar{\mathcal X}}\widehat{P}^{\text{aug}}(x^\prime,i^\prime|x,i,a)V(x^\prime,i^\prime)\right\},
\]
and 
\[
\widehat{T}^\mu_{\lambda}[V](x,i)=\widehat{r}_{\lambda}(x,i,\mu(x,i))+\gamma\sum_{(x^\prime, i^\prime)\in\bar{\mathcal X}}\widehat{P}^{\text{aug}}(x^\prime,i^\prime|x,i,\mu(x,i))V(x^\prime,i^\prime).
\]
From the policy update rule in the above algorithm, one obtains
\[
\widehat{T}^{\mu^{(q+1)}}_{\lambda^{(j)}}\left[\widehat{V}^{\mu^{(q)}}_{\lambda^{(j)}}\right](x,i)=\widehat{T}_{\lambda^{(j)}}\left[\widehat{V}^{\mu^{(q)}}_{\lambda^{(j)}}\right](x,i),\,\,\forall (x,i)\in\bar{\mathcal X}.
\]
On the other hand, by the fixed point equation,
\[
\widehat{T}^{\mu^{(q)}}_{\lambda^{(j)}}\left[\widehat{V}^{\mu^{(q)}}_{\lambda^{(j)}}\right](x,i)=\widehat{V}^{\mu^{(q)}}_{\lambda^{(j)}}(x,i)
\]
and notice that $\widehat{T}^{\mu^{(q)}}_{\lambda^{(j)}}\left[V\right](x,i)\leq \widehat{T}_{\lambda^{(j)}}[V](x,i)$, one obtains
\[
\widehat{V}^{\mu^{(q)}}_{\lambda^{(j)}}(x,i)\leq \widehat{T}^{\mu^{(q+1)}}_{\lambda^{(j)}}\left[\widehat{V}^{\mu^{(q)}}_{\lambda^{(j)}}\right](x,i).
\]
By repeatedly applying $\widehat{T}^{\mu^{(q+1)}}_{\lambda^{(j)}}$ on both sides, this inequality also implies
\begin{equation}\label{eq:conv_1}
\widehat{V}^{\mu^{(q)}}_{\lambda^{(j)}}(x,i)\leq \widehat{T}^{\mu^{(q+1)}}_{\lambda^{(j)}}\left[\widehat{V}^{\mu^{(q)}}_{\lambda^{(j)}}\right](x,i)\leq\cdots\leq \lim_{n\rightarrow\infty}\left(\widehat{T}^{\mu^{(q)}}_{\lambda^{(j)}}\right)^n\left[\widehat{V}^{\mu^{(q)}}_{\lambda^{(j)}}\right](x,i)\leq \widehat{V}^{\mu^{(q+1)}}_{\lambda^{(j)}}(x,i)
\end{equation}
for any $(x,i)\in\bar{\mathcal X}$. When the inner-loop stopping condition is satisfied: $\widehat{V}^{\mu^{(q+1)}}_{\lambda^{(j)}}(x,i)=\widehat{V}^{\mu^{(q)}}_{\lambda^{(j)}}(x,i)$, then
\[
\widehat{T}_{\lambda^{(j)}}\left[\widehat{V}^{\mu^{(q)}}_{\lambda^{(j)}}\right](x,i)=\widehat{T}^{\mu^{(q+1)}}_{\lambda^{(j)}}\left[\widehat{V}^{\mu^{(q)}}_{\lambda^{(j)}}\right](x,i)\\
=\widehat{T}^{\mu^{(q+1)}}_{\lambda^{(j)}}\left[\widehat{V}^{\mu^{(q+1)}}_{\lambda^{(j)}}\right](x)=\widehat{V}^{\mu^{(q+1)}}_{\lambda^{(j)}}(x,i)=\widehat{V}^{\mu^{(q)}}_{\lambda^{(j)}}(x,i)
\]
The first equality is based on the definition of Bellman operator $\widehat{T}_\lambda$ and the policy iteration algorithm. The third equality is due to the fact that $\widehat{V}^{\mu^{(q+1)}}_{\lambda^{(j)}}$ is a solution to fixed point equation: $\widehat{T}^{\mu^{(q+1)}}_{\lambda^{(j)}}[V](x,i)=V(x,i)$. The second and the fourth equality are due to the inner-loop stopping condition. 
Notice that the fixed point of $\widehat{T}_{\lambda^{(j)}}[V](x,i)=V(x,i)$ is unique. Combining these arguments, one obtains $\widehat{V}^{\mu^{(q)}}_{\lambda^{(j)}}(x,i)=\widehat{V}_{\lambda^{(j)}}(x,i)$ where $\widehat{V}_{\lambda^{(j)}}(x,i)=\max_{\mu}\,\mathbb E_{  \widehat{P}^{\text{aug}}}\left(\sum_{k=0}^\infty\gamma^k \widehat{r}_{\lambda^{(j)}}(x_k,i_k,a_k)\mid a_k\sim\mu,x_0=x,i_0=i\right)$. This implies $\widehat{\mu}_{\lambda^{(j)}}=\mu^{(q)}\in\arg\max_\mu \widehat{L}(\mu,\lambda^{(j)})$.

Second we want to show that the sequence of $\lambda^{(j)}$ converges to the global minimum $\widehat{\lambda}\in\argmin_{\lambda\geq 0}f(\lambda)$ where $f(\lambda)=\max_\mu \widehat{L}(\mu,\lambda)$. Notice by definition that $\widehat{L}(\mu,\lambda)$ is a linear function of $\lambda$. Furthermore, since the policy $\mu$ belongs to the closed subset of the compact set of simplexes, then $\max_\mu \widehat{L}(\mu,\lambda)$ is the point-wise supremum over an infinite set of convex functions. This implies that $f(\lambda)=\max_\mu \widehat{L}(\mu,\lambda)$ is convex in $\lambda$. It can be easily shown by sub-gradient calculus that
\[
\frac{d \widehat{L}(\mu,\lambda)}{d\lambda}\bigg\vert_{\mu=\mu^\ast_\lambda}=\mathbb E_{\widehat{P}}\left(\sum_{k=0}^\infty\gamma^k  \left(  r(x_k,a_k)-\frac{  \gamma R_{\max}}{1-\gamma}e(x_k,a_k)\right)\mid a_k\sim\widehat{\mu}_{\lambda},P_0\right)- M_B
\in\partial f(\lambda).
\]
This implies that the Lagrange multiplier update can be viewed as projected sub-gradient descent. Now, for 
\[
\bar\lambda^{(j+1)}:=\lambda^{(j)}-\alpha^{(j)}\frac{d \widehat{L}(\mu,\lambda)}{d\lambda}\bigg\vert_{\mu=\widehat{\mu}_\lambda,\lambda=\lambda^{(j)}}
\]
and $\lambda^{(j+1)}=\left(\bar\lambda^{(j+1)}\right)^+$, since the projection operator is non-expansive, one obtains $(\lambda^{(j+1)}-\widehat{\lambda})^2\leq(\bar\lambda^{(j+1)}-\widehat{\lambda})^2$. 

Therefore, the following expression holds:
\[
\begin{split}
(\lambda^{(j+1)}-\widehat{\lambda})^2\leq &(\bar\lambda^{(j+1)}-\widehat{\lambda})^2\\
=&\left(\lambda^{(j)}-\alpha^{(j)}\frac{d \widehat{L}(\mu,\lambda)}{d\lambda}\bigg\vert_{\mu=\widehat{\mu}_\lambda,\lambda=\lambda^{(j)}}-\widehat{\lambda}\right)^2\\
=&(\lambda^{(j)}-\widehat{\lambda})^2-2\alpha^{(j)}(\lambda^{(j)}-\widehat{\lambda})\frac{d \widehat{L}(\mu,\lambda)}{d\lambda}\bigg\vert_{\mu=\widehat{\mu}_\lambda,\lambda=\lambda^{(j)}}+\left(\alpha^{(j)}\right)^2\left(\frac{d \widehat{L}(\mu,\lambda)}{d\lambda}\bigg\vert_{\mu=\widehat{\mu}_\lambda,\lambda=\lambda^{(j)}}\right)^2\\
\leq &(\lambda^{(j)}-\widehat{\lambda})^2-2\alpha^{(j)}(f(\lambda^{(j)})-f(\widehat{\lambda}))+\left(\alpha^{(j)}\right)^2\left(\frac{d \widehat{L}(\mu,\lambda)}{d\lambda}\bigg\vert_{\mu=\widehat{\mu}_\lambda,\lambda=\lambda^{(j)}}\right)^2,
\end{split}
\]
The inequality is due to the fact that for any $\lambda\geq 0$ and ${d \widehat{L}(\mu,\lambda)}/{d\lambda}\vert_{\mu=\widehat{\mu}_\lambda}\in\partial f(\lambda)$,
\[
f(\lambda)-f(\widehat{\lambda})\geq (\lambda-\widehat{\lambda})\frac{d \widehat{L}(\mu,\lambda)}{d\lambda}\bigg\vert_{\mu=\widehat{\mu}_\lambda}.
\]
This further implies
\[
(\lambda^{(j+1)}-\widehat{\lambda})^2\leq (\lambda^{(0)}-\widehat{\lambda})^2-\sum_{q=0}^j2\alpha^{(q)}(f(\lambda^{(q)})-f(\widehat{\lambda}))+\left(\alpha^{(q)}\right)^2\left(\frac{d \widehat{L}(\mu,\lambda)}{d\lambda}\bigg\vert_{\mu=\widehat{\mu}_\lambda,\lambda=\lambda^{(q)}}\right)^2.
\]
Since $(\lambda^{(j+1)}-\widehat{\lambda})^2$ is a positive quantity and $(\lambda^{(0)}-\widehat{\lambda})^2$ is bounded, this further implies
\[
2\sum_{q=0}^j\alpha^{(q)}(f(\lambda^{(q)})-f(\widehat{\lambda}))\leq (\bar\lambda^{(0)}-\widehat{\lambda})^2+\sum_{q=0}^j\left(\alpha^{(q)}\right)^2\left(\frac{d \widehat{L}(\mu,\lambda)}{d\lambda}\bigg\vert_{\mu=\widehat{\mu}_\lambda,\lambda=\lambda^{(q)}}\right)^2.
\]
By defining $f_{\text{min}}^{(j)}=\min_{q\in\{0,\ldots,j\}}f(\lambda^{(q)})$, the above expression implies
\[
f_{\text{min}}^{(j)}-f(\widehat{\lambda})\leq \frac{1}{\sum_{q=0}^j\alpha^{(q)}}\left((\lambda^{(0)}-\lambda^*)^2+\sum_{q=0}^j\left(\alpha^{(q)}\right)^2\left(\frac{d \widehat{L}(\mu,\lambda)}{d\lambda}\bigg\vert_{\mu=\widehat{\mu}_\lambda,\lambda=\lambda^{(q)}}\right)^2\right).
\]
The step-size rule in \eqref{step_length_cond} for $\alpha^{(q)}$ ensures that the numerator is bounded and the denominator goes to infinity as $j\rightarrow\infty$.
This implies that for any $\epsilon>0$, there exists a constant $N(\epsilon)$ such that for any $j>N(\epsilon)$, $f(\widehat{\lambda}) \leq f_{\text{min}}^{(j)}\leq f(\widehat{\lambda})+\epsilon$.
In other words, the sequence $\lambda^{(j)}$ converges to the global minimum $\widehat{\lambda}$ of $f(\lambda)$ where $f(\lambda)=\max_{\mu} \widehat{L}(\mu,\lambda)$. 

By combining all previous arguments, one shows that $\widehat{L}(\widehat{\mu},\widehat{\lambda})=\widehat{\mathcal D}(P^{\text{aug}}_0,M_B)$, where
\[
\widehat{\mathcal D}(P^{\text{aug}}_0,M_B)=\min_{\lambda\geq 0}\max_{\mu}\,\mathbb E_{  \widehat{P}^{\text{aug}}}\left(\sum_{k=0}^\infty\gamma^k \widehat{r}_{\lambda}(x_k,i_k,a_k)\mid a_k\sim\mu, P^{\text{aug}}_0\right)-\lambda M_B=\min_{\lambda\geq 0}\max_{\mu} \widehat{L}(\mu,\lambda).
\]
Then the convergence proof is completed by using strong duality, i.e., $\widehat{\mathcal D}(P^{\text{aug}}_0,M_B)=\widehat{\mathcal P}(P^{\text{aug}}_0,M_B)$. 
\end{proof}
In the next section, we will investigate the performance in terms of sub-optimality for the solution of this algorithm.
  
\section{Performance}\label{sec:perf}
We are now in position to derive a performance bound on the control policies found by the above algorithm. By the primal formulation of Lagrangian function, one obtains the following expressions:
\begin{alignat*}{2}
\max_{\mu}\min_{\lambda\geq 0} {L}(\mu,\lambda)=&\max_{\mu\in\Pi_S} &\quad &\mathbb E_{\widehat{P}}\left(\sum_{k=0}^\infty\gamma^k r(x_k,a_k)\mid a_k\sim\mu,P_0\right)\\  
&\text{subject to}  & \quad & \mathbb E_{P}\left(\sum_{k=0}^\infty\gamma^k r(x_k,a_k)\mid a_k\sim\mu,P_0\right)\geq M_B,
\end{alignat*}
and  $\min_{\lambda\geq 0} {L}(\widehat{\mu},\lambda)=\mathbb E_{\widehat{P}}\left(\sum_{k=0}^\infty\gamma^k r(x_k,a_k)\mid a_k\sim\widehat{\mu},P_0\right)$ such that $\mathbb E_{P}\left(\sum_{k=0}^\infty\gamma^k r(x_k,a_k)\mid a_k\sim\widehat{\mu},P_0\right)\geq M_B$. Let $\mu^*$ be the optimal policy of the above constrained problem. In order to compare the performance of the policy from approximation $\widehat{\mu}$, one needs to calculate the upper/lower bound for the following expression: 
\[
\mathbb E_{{P}}\left(\sum_{k=0}^\infty\gamma^k r(x_k,a_k)\mid a_k\sim\mu^*,P_0\right)-\mathbb E_{{P}}\left(\sum_{k=0}^\infty\gamma^k r(x_k,a_k)\mid a_k\sim\widehat{\mu},P_0\right).
\]
By the primal formulation of Lagrangian function, since both $\widehat{\mu}$ and $\mu^*$ are feasible control policies and the stage-wise reward functions are bounded, one can easily check that the following quantities are bounded: $\min_{\lambda\geq 0} {L}(\widehat{\mu},\lambda)$ and $\max_{\mu}\min_{\lambda\geq 0} {L}(\mu,\lambda)$. Furthermore, this also implies that
$\mathbb E_{{P}}\left(\sum_{k=0}^\infty\gamma^k r(x_k,a_k)\mid a_k\sim\mu^*,P_0\right)-\mathbb E_{{P}}\left(\sum_{k=0}^\infty\gamma^k r(x_k,a_k)\mid a_k\sim\widehat{\mu},P_0\right)=\max_{\mu}\min_{\lambda\geq 0} {L}(\mu,\lambda)-\min_{\lambda\geq 0} {L}(\widehat{\mu},\lambda)$. 

Define the Bellman residual as follows:
\[
\text{BR}(\widehat{\mu},\widehat{\lambda})=\left\|T_{\widehat{\lambda}} [{V}^{\widehat{\mu}}_{\widehat{\lambda}}]-{V}^{\widehat{\mu}}_{\widehat{\lambda}}\right\|_{\infty}.
\]
Based on the definitions of stage-wise reward functions $r_{\widehat{\lambda}}$ and $\widehat{r}_{\widehat{\lambda}}$, when the probability mis-match error function $e(x,i,a)$ tends to zero for any $(x,i,a)$, the fixed point solution calculated by the approximated policy ${V}^{\widehat{\mu}}_{\widehat{\lambda}}(x,i)$ converges to the fixed point solution induced by the optimal policy ${V}^{\tilde\mu}_{\widehat{\lambda}}(x,i)$ for any $(x,i)$. Notice that $T_{\widehat{\lambda}} [{V}^{\tilde\mu}_{\widehat{\lambda}}](x,i)={V}^{\tilde\mu}_{\widehat{\lambda}}(x,i)$ for any $(x,i)$. Therefore, when $\max_{x,i,a}e(x,i,a)\rightarrow 0$, one obtains ${V}^{\widehat{\mu}}_{\widehat{\lambda}}(x,i)\rightarrow {V}^{\tilde\mu}_{\widehat{\lambda}}(x,i)$, $T_{\widehat{\lambda}} [{V}^{\widehat{\mu}}_{\widehat{\lambda}}](x,i)\rightarrow T_{\widehat{\lambda}} [{V}^{\tilde\mu}_{\widehat{\lambda}}](x,i)={V}^{\tilde\mu}_{\widehat{\lambda}}(x,i)$ for any $(x,i)$ and the Bellman residual $\text{BR}(\widehat{\mu},\widehat{\lambda})$ tends to zero. 

First, we introduce the following Lemma on a sub-optimality performance guarantee.
\begin{lemma}[Performance]\label{lem:perf}
Let $(\widehat{\mu},\widehat{\lambda})$ be the saddle point solution of $\min_{\lambda\geq 0}\max_{\mu} \widehat{L}(\mu,\lambda)$.  Then the following performance bound holds:
\[
\begin{split}
0\leq & \mathbb E_{{P}}\left(\sum_{k=0}^\infty\gamma^k r(x_k,a_k)\mid a_k\sim\mu^*,P_0\right)-\mathbb E_{{P}}\left(\sum_{k=0}^\infty\gamma^k r(x_k,a_k)\mid a_k\sim\widehat{\mu},P_0\right)\\
\leq & \frac{ 2\gamma\widehat{\lambda} R_{\max}}{1-\gamma}\mathbb E_{\widehat{P}}\left[\sum_{k=0}^\infty \gamma^k e(x_k,a_k)|a_k\sim\widehat{\mu}, P_0\right]+\frac{\text{BR}(\widehat{\mu},\widehat{\lambda})}{1-\gamma}.
\end{split}
\]
\end{lemma}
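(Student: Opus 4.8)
The plan is to start from the identity established just before the lemma, namely that the true-performance gap equals
\[
G:=\max_{\mu}\min_{\lambda\geq0}L(\mu,\lambda)-\min_{\lambda\geq0}L(\widehat{\mu},\lambda)=\mathcal P(P^{\text{aug}}_0,M_B)-\min_{\lambda\geq0}L(\widehat{\mu},\lambda),
\]
so that it suffices to bound $G$. The lower bound $G\geq0$ is immediate, since $\widehat{\mu}$ is one admissible choice in the outer maximization, whence $\mathcal P=\max_{\mu}\min_{\lambda}L(\mu,\lambda)\geq\min_{\lambda}L(\widehat{\mu},\lambda)$. For the upper bound I would sandwich $G$ by over-estimating $\mathcal P$ and under-estimating $\min_{\lambda}L(\widehat{\mu},\lambda)$, anchoring both at the computed pair $(\widehat{\mu},\widehat{\lambda})$ and at the surrogate $\widehat{L}$, and letting the two error terms enter separately: the Bellman-residual term from the policy-evaluation side and the $e(\cdot,\cdot)$ term from the model-mismatch side.

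For the $\mathcal P$ side I would first invoke the strong-duality theorem of Section~\ref{sec:strong_duality} to write $\mathcal P=\min_{\lambda\geq0}\max_{\mu}L(\mu,\lambda)\leq\max_{\mu}L(\mu,\widehat{\lambda})$, evaluating the inner minimum at the particular multiplier $\widehat{\lambda}$. Next I would translate this into value-function language: since $\sum_{x,i}P^{\text{aug}}_0(x,i)V^{\mu}_{\widehat{\lambda}}(x,i)=L(\mu,\widehat{\lambda})+\widehat{\lambda}M_B$ for every $\mu$, and the analogous identity with the optimal true augmented value $V_{\widehat{\lambda}}$ (the fixed point of $T_{\widehat{\lambda}}$) produces $\max_{\mu}L(\mu,\widehat{\lambda})$, the quantity $\max_{\mu}L(\mu,\widehat{\lambda})-L(\widehat{\mu},\widehat{\lambda})$ is a $P^{\text{aug}}_0$-average of $V_{\widehat{\lambda}}-V^{\widehat{\mu}}_{\widehat{\lambda}}\geq0$ and is therefore at most $\|V_{\widehat{\lambda}}-V^{\widehat{\mu}}_{\widehat{\lambda}}\|_{\infty}$. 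The standard Bellman-residual estimate, obtained by writing $V_{\widehat{\lambda}}-V^{\widehat{\mu}}_{\widehat{\lambda}}=T_{\widehat{\lambda}}[V_{\widehat{\lambda}}]-V^{\widehat{\mu}}_{\widehat{\lambda}}$, inserting $\pm\,T_{\widehat{\lambda}}[V^{\widehat{\mu}}_{\widehat{\lambda}}]$, and using the contraction property of $T_{\widehat{\lambda}}$ from the Proposition, yields $\|V_{\widehat{\lambda}}-V^{\widehat{\mu}}_{\widehat{\lambda}}\|_{\infty}\leq\text{BR}(\widehat{\mu},\widehat{\lambda})/(1-\gamma)$. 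Hence $\mathcal P\leq L(\widehat{\mu},\widehat{\lambda})+\text{BR}(\widehat{\mu},\widehat{\lambda})/(1-\gamma)$.

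It then remains to pass from the true Lagrangian $L(\widehat{\mu},\widehat{\lambda})$ to the surrogate $\widehat{L}(\widehat{\mu},\widehat{\lambda})$ and to $\min_{\lambda}L(\widehat{\mu},\lambda)$. For the former I would sharpen the feasibility lemma into a two-sided estimate: its proof shows $L(\widehat{\mu},\widehat{\lambda})-\widehat{L}(\widehat{\mu},\widehat{\lambda})=A+B$, where $B=\tfrac{2\gamma\widehat{\lambda}R_{\max}}{1-\gamma}\mathbb{E}_{\widehat{P}^{\text{aug}}}[\sum_k\gamma^k e(x_k,a_k)\mathbf 1\{i_k=1\}\mid\widehat{\mu},P^{\text{aug}}_0]$ and $A$ is the transition-mismatch term, supported only on the branch $i=1$ and with integrand controlled by the reward bound $2\widehat{\lambda}R_{\max}$; rerunning the estimate of \eqref{eq:suff_ineq} with the opposite sign gives $A\leq B$, so $L(\widehat{\mu},\widehat{\lambda})\leq\widehat{L}(\widehat{\mu},\widehat{\lambda})+2B$. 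Because $P^{\text{aug}}_0$ places mass $\tfrac12$ on $i=1$ and the index $i$ is frozen under $P^{\text{aug}}$, the quantity $2B$ collapses to exactly $\tfrac{2\gamma\widehat{\lambda}R_{\max}}{1-\gamma}\mathbb{E}_{\widehat{P}}[\sum_k\gamma^k e(x_k,a_k)\mid\widehat{\mu},P_0]$. For the latter I would use the saddle-point property of $(\widehat{\mu},\widehat{\lambda})$ for $\widehat{L}$ together with $\widehat{L}\leq L$ from the feasibility lemma: $\min_{\lambda}L(\widehat{\mu},\lambda)\geq\min_{\lambda}\widehat{L}(\widehat{\mu},\lambda)=\widehat{L}(\widehat{\mu},\widehat{\lambda})$. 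Chaining the three inequalities gives $G\leq\mathcal P-\widehat{L}(\widehat{\mu},\widehat{\lambda})\leq\tfrac{2\gamma\widehat{\lambda}R_{\max}}{1-\gamma}\mathbb{E}_{\widehat{P}}[\sum_k\gamma^k e\mid\widehat{\mu},P_0]+\text{BR}(\widehat{\mu},\widehat{\lambda})/(1-\gamma)$, as claimed.

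The step I expect to be most delicate is the bookkeeping of constants in the model-mismatch term: one must verify that the factor $2$ in the augmented rewards $r_\lambda$, the factor $\tfrac12$ in the augmented initial distribution $P^{\text{aug}}_0$, and the restriction to the branch $i=1$ combine so that $2B$ reduces to precisely the stated expression with no stray factor. A secondary subtlety is justifying the symmetric estimate $A\leq B$, rather than merely the one-sided $A\geq-B$ used inside the feasibility lemma, since it is this upgrade that turns the feasibility inequality into the two-sided error bound required here.
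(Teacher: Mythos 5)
Your proposal is correct and follows essentially the same route as the paper's proof: the same decomposition of the gap into a model-mismatch term (the two-sided version of the feasibility lemma's estimate, $A+B\leq 2B$, with the factor-$2$/factor-$\tfrac12$ cancellation on the $i=1$ branch handled exactly as in the paper) plus a Bellman-residual term, chained together via strong duality, $\widehat{L}\leq L$, and the saddle-point property. The only cosmetic differences are that you bound $\|V_{\widehat{\lambda}}-V^{\widehat{\mu}}_{\widehat{\lambda}}\|_{\infty}$ by a one-line contraction argument where the paper telescopes $T^N_{\widehat{\lambda}}$, and you invoke the saddle-point hypothesis directly in place of the paper's detour through $\tilde{\lambda}$ and strong duality for $\widehat{L}$.
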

\begin{proof}
Recall that $\widehat{L}(\mu,\lambda)\leq {L}(\mu,\lambda)$ for any $\mu$ and $\lambda\geq 0$. Therefore, by defining $\tilde\mu\in\arg\max_{\mu} {L}(\mu,\widehat{\lambda})$,
the following expression holds:
\begin{equation}\label{eq:bdd}
\begin{split}
0\leq & \min_{\lambda\geq 0}\max_{\mu} {L}(\mu,\lambda)-\min_{\lambda\geq 0}\max_{\mu} \widehat{L}(\mu,\lambda)\leq  \max_{\mu} {L}(\mu,\widehat{\lambda})-\widehat{L}(\widehat{\mu},\widehat{\lambda})= L(\tilde\mu,\widehat{\lambda})-\widehat{L}(\widehat{\mu},\widehat{\lambda})\\
= &\sum_{x,i} P^{\text{aug}}_0(x,i)\left({V}^{\tilde\mu}_{\widehat{\lambda}}(x,i)  -\widehat{V}^{\widehat{\mu}}_{\widehat{\lambda}}(x,i)\right)\\
= &\sum_{x,i} P^{\text{aug}}_0(x,i)\left({V}^{\tilde\mu}_{\widehat{\lambda}}(x,i)-{V}^{\widehat{\mu}}_{\widehat{\lambda}}(x,i)+{V}^{\widehat{\mu}}_{\widehat{\lambda}}(x,i)-\widehat{V}^{\widehat{\mu}}_{\widehat{\lambda}}(x,i)  \right)
\end{split}
\end{equation}
The first inequality is due to the fact that $\widehat{L}(\mu,\lambda)\leq {L}(\mu,\lambda)$. The second inequality is based on the definition of the saddle point solution of $\min_{\lambda\geq 0}\max_{\mu} \widehat{L}(\mu,\lambda)$, and the fact that $\widehat{\lambda}$ is a feasible solution to $\min_{\lambda\geq 0}\max_{\mu} {L}(\mu,\lambda)$. The first equality follows from definition and the second equality follows from the result of Bellman's equation. The last equality is a result of basic algebraic manipulations. 

In order to derive a performance bound between the approximated solution and the optimal solution. We study the rightmost term $\sum_{x,i} P^{\text{aug}}_0(x,i)\left({V}^{\tilde\mu}_{\widehat{\lambda}}(x,i)-{V}^{\widehat{\mu}}_{\widehat{\lambda}}(x,i)+{V}^{\widehat{\mu}}_{\widehat{\lambda}}(x,i)-\widehat{V}^{\widehat{\mu}}_{\widehat{\lambda}}(x,i)  \right)$ of expression \eqref{eq:bdd}.
Consider the first part of this expression: 
\[
\sum_{x,i} P^{\text{aug}}_0(x,i)\left({V}^{\widehat{\mu}}_{\widehat{\lambda}}(x,i)-\widehat{V}^{\widehat{\mu}}_{\widehat{\lambda}}(x,i)\right).
\]
Based on previous analysis in Bellman's recursion, one can easily see that
\[
\begin{split}
\sum_{x,i} P^{\text{aug}}_0(x,i)&\left({V}^{\widehat{\mu}}_{\widehat{\lambda}}(x,i)-\widehat{V}^{\widehat{\mu}}_{\widehat{\lambda}}(x,i)\right)=\mathbb E_{  \widehat{P}^{\text{aug}}}\left(\sum_{k=0}^\infty\gamma^k \left({r}_{\widehat{\lambda}}(x_k,i_k,a_k)- \widehat{r}_{\widehat{\lambda}}(x_k,i_k,a_k)\right)\mid a_k\sim\widehat{\mu}, P^{\text{aug}}_0\right)\\
&+\mathbb E_{  {P}^{\text{aug}}}\left(\sum_{k=0}^\infty\gamma^k  r_{\widehat{\lambda}}(x_k,i_k,a_k)\mid a_k\sim\widehat{\mu}, P^{\text{aug}}_0\right)-\mathbb E_{  \widehat{P}^{\text{aug}}}\left(\sum_{k=0}^\infty\gamma^k  r_{\widehat{\lambda}}(x_k,i_k,a_k)\mid a_k\sim\widehat{\mu}, P^{\text{aug}}_0\right).
\end{split}
\]
By expression \eqref{eq:diff_1}, one notices that
\[
\begin{split}
&\mathbb E_{  \widehat{P}^{\text{aug}}}\left(\sum_{k=0}^\infty\gamma^k ({r}_{\widehat{\lambda}}(x_k,i_k,a_k)- \widehat{r}_{\widehat{\lambda}}(x_k,i_k,a_k))\mid a_k\sim\widehat{\mu}, P^{\text{aug}}_0\right)\\
=&\frac{ 2\gamma\widehat{\lambda} R_{\max}}{1-\gamma}\mathbb E_{{\widehat{P}}^{\text{aug}}}\left[\sum_{k=0}^\infty \gamma^k e(x_k,a_k)\mathbf 1\{i_k=1\}|a_k\sim\widehat{\mu}, P^{\text{aug}}_0\right]=\frac{ \gamma\widehat{\lambda} R_{\max}}{1-\gamma}\mathbb E_{{\widehat{P}}}\left[\sum_{k=0}^\infty \gamma^k e(x_k,a_k)|a_k\sim\widehat{\mu}, P_0\right].
\end{split}
\]
On the other hand, similar to the analysis in expression \eqref{eq:suff_ineq}, one obtains
\begin{equation}
\begin{split}
&\mathbb E_{  P^{\text{aug}}}\left(\sum_{k=0}^\infty\gamma^k r_{\widehat{\lambda}}(x_k,i_k,a_k)\mid a_k\sim\widehat{\mu}, P^{\text{aug}}_0\right)-\mathbb E_{{\widehat{P}}^{\text{aug}}}\left(\sum_{k=0}^\infty\gamma^k r_{\widehat{\lambda}}(x_k,i_k,a_k)\mid a_k\sim\widehat{\mu}, P^{\text{aug}}_0\right)\\
= &\sum_{x^0,i^0}P^{\text{aug}}_0(x^0,i^0)\Bigg\{ \gamma\sum_{(x,i)\in\bar{\mathcal X}}\mathbb E_{ P^{\text{aug}}}\left[\sum_{k=0}^\infty\gamma^k  r_{\widehat{\lambda},\mu}(x_k,i_k)|x_0=x,i_0=i,a_k\sim\widehat{\mu}\right]\cdot\\
&\quad\quad\quad \mathbb E_{{\widehat{P}}^{\text{aug}}}\left[\sum_{k=0}^\infty \gamma^k\left(  P^{\text{aug}}_{\mu}(x,i|x_k,i_k)-{\widehat{P}}^{\text{aug}}_{\mu}(x,i|x_k,i_k)\right)|x_0=x^0,i_0=i^0,a_k\sim\widehat{\mu}\right]\\
\leq & \frac{  2\gamma\widehat{\lambda} R_{\max}}{1-\gamma}\mathbb E_{{\widehat{P}}^{\text{aug}}}\left[\sum_{k=0}^\infty \gamma^k e(x_k,a_k)\mathbf 1\{i_k=1\}|a_k\sim\widehat{\mu}, P^{\text{aug}}_0\right]\\
=&\frac{ \gamma\widehat{\lambda} R_{\max}}{1-\gamma}\mathbb E_{{\widehat{P}}}\left[\sum_{k=0}^\infty \gamma^k e(x_k,a_k)|a_k\sim\widehat{\mu}, P_0\right].
\end{split}
\end{equation}
Therefore by combining these analysis,
\begin{equation}\label{eq:diff_V}
\sum_{x,i} P^{\text{aug}}_0(x,i)\left({V}^{\widehat{\mu}}_{\widehat{\lambda}}(x,i)-\widehat{V}^{\widehat{\mu}}_{\widehat{\lambda}}(x,i)\right)\leq \frac{ 2\gamma\widehat{\lambda} R_{\max}}{1-\gamma}\mathbb E_{{\widehat{P}}}\left[\sum_{k=0}^\infty \gamma^k e(x_k,a_k)|a_k\sim\widehat{\mu}, P_0\right].
\end{equation}

Next consider the second part in expression \eqref{eq:bdd}:
\[
\sum_{x,i} P^{\text{aug}}_0(x,i)\left({V}^{\tilde\mu}_{\widehat{\lambda}}(x,i)-{V}^{\widehat{\mu}}_{\widehat{\lambda}}(x,i)\right).
\]
Recall the Bellman residual: $\text{BR}(\widehat{\mu},\widehat{\lambda})=\left\|T_{\widehat{\lambda}} [{V}^{\widehat{\mu}}_{\widehat{\lambda}}]-{V}^{\widehat{\mu}}_{\widehat{\lambda}}\right\|_{\infty}$.
By applying the contraction mapping property on $T_{\widehat{\lambda}} [{V}^{\widehat{\mu}}_{\widehat{\lambda}}](x,i)-{V}^{\widehat{\mu}}_{\widehat{\lambda}}(x,i)$ for any $(x,i)$, one obtains
\[
T^2_{\widehat \lambda} [{V}^{\widehat{\mu}}_{\widehat{\lambda}}](x,i)-T_{\widehat{\lambda}}[{V}^{\widehat{\mu}}_{\widehat{\lambda}}](x,i)\leq \gamma\text{BR}(\widehat{\mu},\widehat{\lambda}).
\]
By an induction argument, the above expression becomes $T^N_{\widehat{\lambda}} [{V}^{\widehat{\mu}}_{\widehat{\lambda}}](x,i)-T^{N-1}_{\widehat{\lambda}}[{V}^{\widehat{\mu}}_{\widehat{\lambda}}](x,i)\leq \gamma^{N-1}\text{BR}(\widehat{\mu},\widehat{\lambda})$, for which by a telescoping sum, it further implies
\[
T^N_{\widehat{\lambda}} [{V}^{\widehat{\mu}}_{\widehat{\lambda}}](x,i)-{V}^{\widehat{\mu}}_{\widehat{\lambda}}(x,i)= \sum_{k=1}^{N}T^k_{\widehat{\lambda}} [{V}^{\widehat{\mu}}_{\widehat{\lambda}}](x,i)-T^{k-1}_{\widehat{\lambda}}[{V}^{\widehat{\mu}}_{\widehat{\lambda}}](x,i)
\leq  \sum_{k=1}^N\gamma^{k-1}\text{BR}(\widehat{\mu},\widehat{\lambda}).
\]
By letting $N\rightarrow\infty$ and noticing that $\lim_{N\rightarrow\infty}T^N_{\widehat{\lambda}} [{V}^{\widehat{\mu}}_{\widehat{\lambda}}](x,i)={V}^{\tilde\mu}_{\widehat{\lambda}}(x,i)$, one finally obtains
\[
\sum_{x,i} P^{\text{aug}}_0(x,i)\left({V}^{\tilde\mu}_{\widehat{\lambda}}(x,i)-{V}^{\widehat{\mu}}_{\widehat{\lambda}}(x,i)\right)\leq  \lim_{N\rightarrow\infty}\sum_{k=1}^N\gamma^{k-1}\text{BR}(\widehat{\mu},\widehat{\lambda})= \frac{\text{BR}(\widehat{\mu},\widehat{\lambda})}{1-\gamma}.
\]

By combining both parts in expression \eqref{eq:bdd}, the performance bound can be re-written as follows:
\begin{equation}\label{eq:perf_1}
\begin{split}
0\leq  \min_{\lambda\geq 0}\max_{\mu} {L}(\mu,\lambda)-\min_{\lambda\geq 0}\max_{\mu} \widehat{L}(\mu,\lambda)\leq & \frac{ 2\gamma\widehat{\lambda} R_{\max}}{1-\gamma}\mathbb E_{{\widehat{P}}}\left[\sum_{k=0}^\infty \gamma^k e(x_k,a_k)|a_k\sim\widehat{\mu}, P_0\right]\\
&+\frac{\text{BR}(\widehat{\mu},\widehat{\lambda})}{1-\gamma}.
\end{split}
\end{equation}
Furthermore, by defining $\tilde\lambda\in\arg\min_{\lambda\geq 0} {L}(\widehat{\mu},\lambda)$, one obtains
\[
\begin{split}
0\leq &\max_{\mu}\min_{\lambda\geq 0} {L}(\mu,\lambda)-{L}(\widehat{\mu},\tilde\lambda)\\
\leq & \min_{\lambda\geq 0}\max_{\mu} {L}(\mu,\lambda)-\widehat{L}(\widehat{\mu},\tilde\lambda)\\
\leq & \min_{\lambda\geq 0}\max_{\mu} {L}(\mu,\lambda)-\min_{\lambda\geq 0}\widehat{L}(\widehat{\mu},\lambda)\\
=&\min_{\lambda\geq 0}\max_{\mu} {L}(\mu,\lambda)-\max_{\mu}\min_{\lambda\geq 0}\widehat{L}(\mu,\lambda)\\
= &\min_{\lambda\geq 0}\max_{\mu} {L}(\mu,\lambda)-\min_{\lambda\geq 0}\max_{\mu}\widehat{L}(\mu,\lambda)\\
\leq &\frac{ 2\gamma\widehat{\lambda} R_{\max}}{1-\gamma}\mathbb E_{{\widehat{P}}^{\text{aug}}}\left[\sum_{k=0}^\infty \gamma^k e(x_k,a_k)\mathbf 1\{i_k=1\}|a_k\sim\widehat{\mu}, P_0\right]+\frac{\text{BR}(\widehat{\mu},\widehat{\lambda})}{1-\gamma}.
\end{split}
\]
The first inequality follows from the fact that ${L}(\widehat{\mu},\tilde\lambda)=\min_{\lambda\geq 0} {L}(\widehat{\mu},\lambda)\leq \max_{\mu}\min_{\lambda\geq 0} {L}(\mu,\lambda)$, where the first and second inequality in this expression follow from definition and the last equality follows from strong duality. The second inequality is based on the fact that ${L}(\mu,\lambda)\geq \widehat{L}(\mu,\lambda)$ and strong duality $\max_{\mu}\min_{\lambda\geq 0} {L}(\mu,\lambda)=\min_{\lambda\geq 0}\max_{\mu} {L}(\mu,\lambda)$. The third inequality and the first equality follows from definition. The second equality follows from strong duality: $\min_{\lambda\geq 0}\max_{\mu} \widehat{L}(\mu,\lambda)=\max_{\mu}\min_{\lambda\geq 0} \widehat{L}(\mu,\lambda)$ and the last inequality follows from the performance bound in \eqref{eq:perf_1}.
\end{proof}
Recall the Bellman residual: $\text{BR}(\widehat{\mu},\widehat{\lambda})=\left\|T_{\widehat{\lambda}} [{V}^{\widehat{\mu}}_{\widehat{\lambda}}]-{V}^{\widehat{\mu}}_{\widehat{\lambda}}\right\|_{\infty}$. In general, one cannot calculate the Bellman residual $\text{BR}(\widehat{\mu},\widehat{\lambda})$ because the transition probability $P^{\text{aug}}$ is not known. We therefore provide an upper bound for $\text{BR}(\widehat{\mu},\widehat{\lambda})$ in the next lemma.
\begin{lemma}\label{lem:tech}
Let $(\widehat{\mu},\widehat{\lambda})$ be the saddle point solution of $\min_{\lambda\geq 0}\max_{\mu} \widehat{L}(\mu,\lambda)$. 
Then the following expression holds:
\[
\text{BR}(\widehat{\mu},\widehat{\lambda})\leq \max_{x\in\mathcal X,a\in\mathcal A}\left\{\frac{ 2(1+\gamma)\gamma\widehat{\lambda} R_{\max}}{1-\gamma}\mathbb E_{{\widehat{P}}}\left[\sum_{k=0}^\infty \gamma^k e(x_k,a_k)|x_0=x,a_k\sim\widehat{\mu}\right]+\frac{  4\gamma\widehat{\lambda} R_{\max}}{1-\gamma}e(x,a)\right\}.
\]
\end{lemma}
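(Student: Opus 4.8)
The plan is to telescope the residual through the \emph{simulated} value function $\widehat V^{\widehat\mu}_{\widehat\lambda}$, using two fixed-point facts already in hand: by the convergence theorem $\widehat\mu$ is greedy in the simulated model, so $\widehat T_{\widehat\lambda}[\widehat V^{\widehat\mu}_{\widehat\lambda}]=\widehat V^{\widehat\mu}_{\widehat\lambda}$, while $V^{\widehat\mu}_{\widehat\lambda}$ is the fixed point of the true policy operator, $T^{\widehat\mu}_{\widehat\lambda}[V^{\widehat\mu}_{\widehat\lambda}]=V^{\widehat\mu}_{\widehat\lambda}$. I would then write
\[
T_{\widehat\lambda}[V^{\widehat\mu}_{\widehat\lambda}]-V^{\widehat\mu}_{\widehat\lambda}=\underbrace{\big(T_{\widehat\lambda}[V^{\widehat\mu}_{\widehat\lambda}]-T_{\widehat\lambda}[\widehat V^{\widehat\mu}_{\widehat\lambda}]\big)}_{(\mathrm I)}+\underbrace{\big(T_{\widehat\lambda}[\widehat V^{\widehat\mu}_{\widehat\lambda}]-\widehat T_{\widehat\lambda}[\widehat V^{\widehat\mu}_{\widehat\lambda}]\big)}_{(\mathrm{II})}+\underbrace{\big(\widehat V^{\widehat\mu}_{\widehat\lambda}-V^{\widehat\mu}_{\widehat\lambda}\big)}_{(\mathrm{III})},
\]
where the simulated fixed-point identity cancels the would-be fourth term. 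Then $\text{BR}(\widehat\mu,\widehat\lambda)\le\|(\mathrm I)\|_\infty+\|(\mathrm{II})\|_\infty+\|(\mathrm{III})\|_\infty$, and I bound the first by contraction, the second by the one-step operator mismatch, and the third by a propagated value-gap estimate.

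For the operator mismatch $(\mathrm{II})$ I would use $|\max_a f(a)-\max_a g(a)|\le\max_a|f(a)-g(a)|$. On the sheet $i=0$ the rewards $r_{\widehat\lambda},\widehat r_{\widehat\lambda}$ and the kernels $P^{\text{aug}},\widehat P^{\text{aug}}$ coincide, so $(\mathrm{II})(x,0)=0$; on $i=1$ the difference splits into the reward gap $r_{\widehat\lambda}-\widehat r_{\widehat\lambda}=\frac{2\gamma\widehat\lambda R_{\max}}{1-\gamma}e(x,a)$ and the transition gap $\gamma\sum_{x'}\big(P(x'|x,a)-\widehat P(x'|x,a)\big)\widehat V^{\widehat\mu}_{\widehat\lambda}(x',1)$. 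The latter I control by $\gamma\|\widehat V^{\widehat\mu}_{\widehat\lambda}(\cdot,1)\|_\infty\,e(x,a)$ using the defining $1$-norm bound of $e$, together with the uniform estimate $\|\widehat V^{\widehat\mu}_{\widehat\lambda}(\cdot,1)\|_\infty\le\frac{2\widehat\lambda R_{\max}}{1-\gamma}$ that follows from the $i=1$ reward magnitude $2\widehat\lambda R_{\max}$ and the geometric discount. Summing the two contributions gives the pointwise estimate $|(\mathrm{II})(x,i)|\le\frac{4\gamma\widehat\lambda R_{\max}}{1-\gamma}e(x,a)$, which is exactly the second term inside the $\max$.

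For $(\mathrm I)$ and $(\mathrm{III})$ I would reduce everything to $\|V^{\widehat\mu}_{\widehat\lambda}-\widehat V^{\widehat\mu}_{\widehat\lambda}\|_\infty$: the contraction property of $T_{\widehat\lambda}$ gives $\|(\mathrm I)\|_\infty\le\gamma\|V^{\widehat\mu}_{\widehat\lambda}-\widehat V^{\widehat\mu}_{\widehat\lambda}\|_\infty$, hence $\|(\mathrm I)\|_\infty+\|(\mathrm{III})\|_\infty\le(1+\gamma)\|V^{\widehat\mu}_{\widehat\lambda}-\widehat V^{\widehat\mu}_{\widehat\lambda}\|_\infty$. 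To estimate the value gap I would re-run, now from an arbitrary fixed start $(x,1)$ rather than against $P^{\text{aug}}_0$, the occupation-measure identity already used in \eqref{eq:suff_ineq}–\eqref{eq:diff_1}: decomposing $V^{\widehat\mu}_{\widehat\lambda}-\widehat V^{\widehat\mu}_{\widehat\lambda}$ into a transition-mismatch piece (same reward $r_{\widehat\lambda}$, kernel $P^{\text{aug}}$ versus $\widehat P^{\text{aug}}$) and the reward-correction piece $r_{\widehat\lambda}-\widehat r_{\widehat\lambda}$, each is expressed through the resolvent $(I-\gamma\widehat P^{\text{aug}}_\mu)^{-1}$ and bounded, exactly as in \eqref{eq:diff_V}, by a $\frac{2\gamma\widehat\lambda R_{\max}}{1-\gamma}$-scale factor times $\mathbb E_{\widehat P}\big[\sum_k\gamma^k e(x_k,a_k)\mid x_0=x,a_k\sim\widehat\mu\big]$; on $i=0$ the gap vanishes identically, so only the $i=1$ start contributes. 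Propagating this bound through the $(1+\gamma)$ multiplier yields the first term $\frac{2(1+\gamma)\gamma\widehat\lambda R_{\max}}{1-\gamma}\,\mathbb E_{\widehat P}\big[\sum_k\gamma^k e(x_k,a_k)\mid x_0=x\big]$, and taking $\max_{x,a}$ of the sum of the two contributions gives the claim.

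The hard part will be the constant bookkeeping in the value-gap step. The bound \eqref{eq:diff_V} of Lemma~\ref{lem:perf} is a $P^{\text{aug}}_0$-averaged statement and enjoys a factor-$\tfrac12$ saving because $P^{\text{aug}}_0$ places only half its mass on $i=1$, whereas $\text{BR}$ is a sup-norm quantity forcing a \emph{per-state} estimate from $(x,1)$, where that saving is absent; bounding the transition- and reward-mismatch pieces independently by the triangle inequality would instead produce a $\frac{4\gamma\widehat\lambda R_{\max}}{1-\gamma}$-scale gap and hence the looser coefficient $4(1+\gamma)$. Matching the announced $2(1+\gamma)$ therefore requires handling the two mismatch pieces jointly — the reward correction $-\frac{2\gamma\lambda R_{\max}}{1-\gamma}e$ was built precisely to offset the transition error, so that the combined gap is carried by a single resolvent bound rather than a sum — and exploiting the sign information $V^{\widehat\mu}_{\widehat\lambda}\ge\widehat V^{\widehat\mu}_{\widehat\lambda}$ from the feasibility lemma. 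The supporting technical steps are the uniform value estimate $\frac{2\widehat\lambda R_{\max}}{1-\gamma}$ and the conversion, via the occupation measure, of the $1$-norm transition error into the discounted sum of $e$ under $\widehat P$.
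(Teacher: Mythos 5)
Your proof is, step for step, the paper's: the same three-term split of $T_{\widehat\lambda}[V^{\widehat\mu}_{\widehat\lambda}]-V^{\widehat\mu}_{\widehat\lambda}$ through $\widehat V^{\widehat\mu}_{\widehat\lambda}$ using the two fixed-point identities $\widehat T_{\widehat\lambda}[\widehat V^{\widehat\mu}_{\widehat\lambda}]=\widehat V^{\widehat\mu}_{\widehat\lambda}$ and $T^{\widehat\mu}_{\widehat\lambda}[V^{\widehat\mu}_{\widehat\lambda}]=V^{\widehat\mu}_{\widehat\lambda}$; contraction giving the $(1+\gamma)\bigl\|V^{\widehat\mu}_{\widehat\lambda}-\widehat V^{\widehat\mu}_{\widehat\lambda}\bigr\|_\infty$ reduction; the operator mismatch split into the reward gap $\frac{2\gamma\widehat\lambda R_{\max}}{1-\gamma}e(x,a)$ plus a transition gap controlled by $\gamma\bigl\|\widehat V^{\widehat\mu}_{\widehat\lambda}\bigr\|_\infty e(x,a)$ with the same uniform value estimate, yielding the $\frac{4\gamma\widehat\lambda R_{\max}}{1-\gamma}e(x,a)$ term; and the pointwise value gap via the occupation-measure/resolvent argument of \eqref{eq:suff_ineq}--\eqref{eq:diff_V}.

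The one divergence is in the final constant, and there your worry is better founded than your proposed fix. The paper performs no joint cancellation: it simply asserts, ``similar to the derivation in \eqref{eq:diff_V}'', the pointwise bound $\bigl|V^{\widehat\mu}_{\widehat\lambda}(x,i)-\widehat V^{\widehat\mu}_{\widehat\lambda}(x,i)\bigr|\le \frac{2\gamma\widehat\lambda R_{\max}}{1-\gamma}\mathbb E_{\widehat P}\bigl[\sum_{k=0}^\infty\gamma^k e(x_k,a_k)\mid x_0=x\bigr]$, silently carrying over the constant from the $P^{\text{aug}}_0$-averaged statement. As you correctly observe, that constant relied on the factor $\tfrac12$ that $P^{\text{aug}}_0$ places on $i=1$; starting from a fixed $(x,1)$ the transition-mismatch piece and the reward-correction piece each contribute at scale $\frac{2\gamma\widehat\lambda R_{\max}}{1-\gamma}$, so this route gives $\frac{4\gamma\widehat\lambda R_{\max}}{1-\gamma}$. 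Your proposed rescue does not close this: the reward correction offsets the transition error only from below---it is exactly what forces $V^{\widehat\mu}_{\widehat\lambda}\ge\widehat V^{\widehat\mu}_{\widehat\lambda}$ in the feasibility lemma---whereas for the \emph{upper} bound the transition piece can carry the same sign as the reward piece, so within this decomposition the two contributions genuinely add and the sign information buys only the trivial lower bound $0$. In other words, you have uncovered a factor-of-two slip in the paper's own proof (the constant this method actually supports in the first term is $4(1+\gamma)$, not $2(1+\gamma)$); with that replacement your argument is complete and coincides with the paper's. A smaller liberty you share with the paper: bounding $\bigl\|\widehat V^{\widehat\mu}_{\widehat\lambda}(\cdot,1)\bigr\|_\infty$ by $\frac{2\widehat\lambda R_{\max}}{1-\gamma}$ ignores that $\widehat r_{\widehat\lambda}$ contains the negative correction term, which strictly speaking enlarges this sup-norm.
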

\begin{proof}
By triangular inequality, one can easily see that
\[
\begin{split}
\text{BR}(\widehat{\mu},\widehat{\lambda})\leq &\left\|T_{\widehat{\lambda}} [\widehat{V}^{\widehat{\mu}}_{\widehat{\lambda}}]-\widehat{V}^{\widehat{\mu}}_{\widehat{\lambda}}\right\|_{\infty}+\left\|\widehat{V}^{\widehat{\mu}}_{\widehat{\lambda}}-{V}^{\widehat{\mu}}_{\widehat{\lambda}}\right\|_{\infty}+\left\|T_{\widehat{\lambda}} [{V}^{\widehat{\mu}}_{\widehat{\lambda}}]-T_{\widehat{\lambda}} [\widehat{V}^{\widehat{\mu}}_{\widehat{\lambda}}]\right\|_{\infty}\\
\leq &(1+\gamma)\left\|\widehat{V}^{\widehat{\mu}}_{\widehat{\lambda}}-{V}^{\widehat{\mu}}_{\widehat{\lambda}}\right\|_{\infty}+\left\|T_{\widehat{\lambda}} [\widehat{V}^{\widehat{\mu}}_{\widehat{\lambda}}]-\widehat{T}_{\widehat{\lambda}} [\widehat{V}^{\widehat{\mu}}_{\widehat{\lambda}}]\right\|_{\infty}
\end{split}
\]
The second inequality is due to contraction property of Bellman operator and the fixed point theorem: $\widehat{T}_{\widehat{\lambda}} [\widehat{V}^{\widehat{\mu}}_{\widehat{\lambda}}](x,i)=\widehat{V}^{\widehat{\mu}}_{\widehat{\lambda}}(x,i)$, for any $ (x,i)\in\bar{\mathcal X}$. Similar to the derivation in \eqref{eq:diff_V}, one obtains
\begin{equation*}
\left|{V}^{\widehat{\mu}}_{\widehat{\lambda}}(x,i)-\widehat{V}^{\widehat{\mu}}_{\widehat{\lambda}}(x,i)\right|\leq \frac{ 2\gamma\widehat{\lambda} R_{\max}}{1-\gamma}\mathbb E_{{\widehat{P}}}\left[\sum_{k=0}^\infty \gamma^k e(x_k,a_k)|x_0=x,a_k\sim\widehat{\mu}\right],\forall (x,i)\in\bar{\mathcal X}.
\end{equation*}
Furthermore for any $(x,i)\in\bar{\mathcal X}$,
\[
\begin{split}
&\left|T_{\widehat{\lambda}} [\widehat{V}^{\widehat{\mu}}_{\widehat{\lambda}}](x,i)-\widehat{T}_{\widehat{\lambda}} [\widehat{V}^{\widehat{\mu}}_{\widehat{\lambda}}](x,i)\right|=\left|\max_{a\in\mathcal A}\left\{r_{\widehat{\lambda}}(x,i,a)+\gamma\sum_{(x^\prime, i^\prime)\in\bar{\mathcal X}}P^{\text{aug}}(x^\prime,i^\prime|x,i,a)\widehat{V}^{\widehat{\mu}}_{\widehat{\lambda}}(x^\prime,i^\prime)\right\}\right.\\
&\left.\quad\quad\quad\quad\quad\quad\quad\quad\quad\quad\quad\quad\quad\quad-\max_{a\in\mathcal A}\left\{\widehat{r}_{\widehat{\lambda}}(x,i,a)+\gamma\sum_{(x^\prime, i^\prime)\in\bar{\mathcal X}}\widehat{P}^{\text{aug}}(x^\prime,i^\prime|x,i,a)\widehat{V}^{\widehat{\mu}}_{\widehat{\lambda}}(x^\prime,i^\prime)\right\}\right|\\
\leq & \max_{a\in\mathcal A}\left\{\sum_{x^\prime\in\mathcal X}\left|{P}(x^\prime|x,a)-\widehat{P}(x^\prime|x,a)\right|\frac{ 2\gamma\widehat{\lambda} R_{\max}}{1-\gamma}+\frac{  2\gamma\widehat{\lambda} R_{\max}}{1-\gamma}e(x,a)\right\}\leq\frac{  4\gamma\widehat{\lambda} R_{\max}}{1-\gamma}\max_{a\in\mathcal A} e(x,a).
\end{split}
\]
Thus by combining the above results, the proof is completed.
\end{proof}

Finally, we provide the sub-optimality performance bound in the following theorem. The proof of this theorem is completed by combining the results from Lemma \ref{lem:perf} and \ref{lem:tech}.
\begin{theorem}[Performance]
Let $(\widehat{\mu},\widehat{\lambda})$ be the saddle point solution of $\min_{\lambda\geq 0}\max_{\mu} \widehat{L}(\mu,\lambda)$. Then the following performance bound holds:
\[
\begin{split}
0\leq & \mathbb E_{{P}}\left(\sum_{k=0}^\infty\gamma^k r(x_k,a_k)\mid a_k\sim\mu^*,P_0\right)-\mathbb E_{{P}}\left(\sum_{k=0}^\infty\gamma^k r(x_k,a_k)\mid a_k\sim\widehat{\mu},P_0\right)\\
\leq & \frac{ 2\gamma\widehat{\lambda} R_{\max}}{1-\gamma}\mathbb E_{\widehat{P}}\left[\sum_{k=0}^\infty \gamma^k e(x_k,a_k)|a_k\sim\widehat{\mu}, P_0\right]\\
&+\max_{x\in\mathcal X,a\in\mathcal A}\left\{\frac{ 2(1+\gamma)\gamma\widehat{\lambda} R_{\max}}{(1-\gamma)^2}\mathbb E_{{\widehat{P}}}\left[\sum_{k=0}^\infty \gamma^k e(x_k,a_k)|x_0=x,a_k\sim\widehat{\mu}\right]+\frac{  4\gamma\widehat{\lambda} R_{\max}}{(1-\gamma)^2}e(x,a)\right\}.
\end{split}
\]
\end{theorem}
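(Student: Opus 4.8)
The plan is to derive the stated bound by a direct concatenation of the two preceding lemmas, with no new analytic machinery required. Lemma~\ref{lem:perf} already bounds the true-model suboptimality gap
\[
\Delta:=\mathbb E_{P}\!\left(\sum_{k=0}^\infty\gamma^k r(x_k,a_k)\mid a_k\sim\mu^*,P_0\right)-\mathbb E_{P}\!\left(\sum_{k=0}^\infty\gamma^k r(x_k,a_k)\mid a_k\sim\widehat{\mu},P_0\right)
\]
from below by $0$ and from above by the sum of the error-measure term $\frac{2\gamma\widehat{\lambda}R_{\max}}{1-\gamma}\mathbb E_{\widehat{P}}[\sum_{k}\gamma^k e(x_k,a_k)\mid a_k\sim\widehat{\mu},P_0]$ and the residual term $\text{BR}(\widehat{\mu},\widehat{\lambda})/(1-\gamma)$, whereas Lemma~\ref{lem:tech} supplies an explicit upper bound on $\text{BR}(\widehat{\mu},\widehat{\lambda})$ in terms of quantities evaluated solely under the simulated kernel $\widehat{P}$. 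The whole proof therefore reduces to substituting the latter into the former.

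First I would restate the inequality of Lemma~\ref{lem:perf} verbatim. The left nonnegativity encodes the feasibility-with-improvement guarantee proved there (it rests on $\widehat{L}(\mu,\lambda)\le L(\mu,\lambda)$ together with strong duality), while the right-hand side isolates the single term $\text{BR}(\widehat{\mu},\widehat{\lambda})/(1-\gamma)$ that is not directly computable, since the true augmented kernel $P^{\text{aug}}$ is unknown to the user.

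Next I would insert the residual bound from Lemma~\ref{lem:tech} for $\text{BR}(\widehat{\mu},\widehat{\lambda})$ and divide it by $1-\gamma>0$. The only step needing a word of justification is pulling the positive scalar $1/(1-\gamma)$ inside the maximization over $(x,a)$; this is legitimate because multiplication by a strictly positive constant preserves the maximizer, and it converts each $1/(1-\gamma)$ prefactor of Lemma~\ref{lem:tech} into $1/(1-\gamma)^2$. Adding the resulting expression to the untouched error-measure term from Lemma~\ref{lem:perf} reproduces the two summands on the right-hand side of the theorem exactly.

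There is no genuine obstacle here: both constituent lemmas are already established, so the argument is purely mechanical bookkeeping of the $(1-\gamma)$ factors. As a sanity check I would note that the bound collapses to zero as $\max_{x,a}e(x,a)\to 0$, consistent with the earlier remark that the Bellman residual itself vanishes in that limit, confirming that the constants have been combined correctly.
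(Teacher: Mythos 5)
Your proposal is correct and coincides with the paper's own proof, which is stated in one line as ``combining the results from Lemma~\ref{lem:perf} and~\ref{lem:tech}'': substituting the residual bound of Lemma~\ref{lem:tech} into the $\text{BR}(\widehat{\mu},\widehat{\lambda})/(1-\gamma)$ term of Lemma~\ref{lem:perf} and pulling the positive factor $1/(1-\gamma)$ inside the maximization yields exactly the stated $(1-\gamma)^2$ denominators. Your bookkeeping of the constants and the sanity check as $\max_{x,a}e(x,a)\to 0$ are both consistent with the paper.
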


\section{Conclusions}\label{sec:conclusion}
In this paper, we provided an offline algorithm that solves for a ``good" control policy using a simulated Markov decision process (MDP), and guaranteed that this policy performs better than the baseline policy in the real environment. Furthermore, we provided a performance bound on sub-optimality for the control policy generated by this algorithm.

Future work includes 1) an extension of the problem formulation to include risk-sensitive objective functions, 2) an analysis the convergence rate to the current algorithm, 3) numerical experiments of this algorithm in practical domains and 4) a development of approximate algorithms when the state and control spaces are exponentially huge. 

\newpage
\begin{small}
\bibliography{LTV}
\bibliographystyle{plainnat}
\end{small}

\addtolength{\textheight}{-14cm}   % This command serves to balance the column lengths
                                  % on the last page of the document manually. It shortens
                                  % the textheight of the last page by a suitable amount.
                                  % This command does not take effect until the next page
                                  % so it should come on the page before the last. Make
                                  % sure that you do not shorten the textheight too much.
%%%%%%%%%%%%%%%%%%%%%%%%%%%%%%%%%%%%%%%%%%%%%%%%%%%%%%%%%%%%%%%%%%%%%%%%%%%%%%%%
%%%%%%%%%%%%%%%%%%%%%%%%%%%%%%%%%%%%%%%%%%%%%%%%%%%%%%%%%%%%%%%%%%%%%%%%%%%%%%%%
\end{document}